\documentclass[]{article}

\usepackage{amsfonts}
\usepackage{amsmath}
\usepackage{amssymb}
\usepackage{amsthm}
\usepackage{algorithm}
\usepackage{algorithmicx}
\usepackage{algpseudocode}
\usepackage{graphicx}
\usepackage{indentfirst}
\usepackage{placeins}
\usepackage[hidelinks]{hyperref}
\usepackage{url}

\numberwithin{equation}{section}
\theoremstyle{plain}
\newtheorem{theorem}{Theorem}[section]
\newtheorem{lemma}[theorem]{Lemma}
\newtheorem{proposition}[theorem]{Proposition}
\theoremstyle{remark}
\newtheorem{remark}[theorem]{Remark}
\numberwithin{algorithm}{section}
\algrenewcommand\algorithmicrequire{\textbf{Input:}}
\algrenewcommand\algorithmicensure{\textbf{Output:}}

\newcommand{\R}{\mathbb{R}}
\newcommand{\Mr}{\mathcal{M}_r}
\newcommand{\Mle}{\mathcal{M}_{\le r}}
\newcommand{\eps}{\varepsilon}
\newcommand{\rank}{\operatorname{rank}}
\newcommand{\range}{\operatorname{range}}

\hypersetup{
 pdftitle={Robust High-Order Projector-Splitting Integrators},
 pdfauthor={Shiheng Zhang and Jingwei Hu},
 pdfkeywords={dynamical low-rank approximation; projector splitting; exactness; numerical increments; robust integrators; small singular values; Runge--Kutta methods}
}

\begin{document}

\title{Robust High-Order Projector-Splitting Integrators}
\author{Shiheng Zhang\thanks{Department of Applied Mathematics,
University of Washington, Seattle, WA 98195, USA
(\texttt{shzhang3@uw.edu}).}
\and
Jingwei Hu\thanks{Department of Applied Mathematics,
University of Washington, Seattle, WA 98195, USA
(\texttt{hujw@uw.edu}).}}
\date{}

\maketitle

\begin{abstract}
We develop a general framework for constructing robust high-order projector-splitting
integrators for dynamical low-rank approximation. For a prescribed matrix increment, we show that the standard \(K\)-\(S\)-\(L\) projector-splitting step \cite{LubichOseledets2014} is equivalent to a reduced \(K\)-\(L\) step, thereby eliminating the explicit
backward \(S\)-step. We then establish a central relaxed exactness property of the standard
projector-splitting integrator: the rank-$r$ approximation inherits the accuracy of the numerical matrix increment, with an error bound independent of small singular values. The resulting schemes evolve fixed rank-\(r\) factors and require neither basis augmentation nor rank truncation.
As concrete examples, we combine the framework with selected second- and third-order Runge--Kutta methods to obtain robust high-order projector-splitting integrators.
Numerical experiments confirm the predicted uniform convergence rates with respect to small singular values.
\end{abstract}

\medskip
\noindent\textbf{Keywords.}
Dynamical low-rank approximation; projector splitting; exactness; numerical
increment; robust low-rank integrator; small singular values; Runge--Kutta
method.

\smallskip
\noindent\textbf{MSC 2020.} 65L05, 65L20, 65F55.

\section{Introduction}
\label{sec:introduction}

Dynamical low-rank approximation of time-dependent matrices
\cite{KochLubich2007} replaces the solution
\(A(t)\in\R^{m\times n}\) of a large matrix differential equation
\begin{equation}\label{eq:full-ode}
 \dot A(t)=F(t,A(t)),\qquad A(t_0)=A_0,
\end{equation}
by matrices \(Y(t)\) of rank \(r\), represented in the factorized form
\begin{equation}\label{eq:factorization}
 Y=USV^\top,
\end{equation}
where \(U\in\R^{m\times r}\) and \(V\in\R^{n\times r}\) have
orthonormal columns and \(S\in\R^{r\times r}\) is a small matrix.  The
rank constraint is imposed by projecting the right-hand side of
\eqref{eq:full-ode} onto the tangent space of the rank-\(r\) manifold,
\begin{equation}\label{eq:projected-ode}
 \dot Y(t)=P_r(Y(t))F(t,Y(t)).
\end{equation}
Throughout the paper, \(\|\cdot\|\) denotes the Frobenius norm and
\(\|\cdot\|_2\) denotes the matrix operator norm.

The differential equations for the factors in \eqref{eq:factorization}
contain \(S^{-1}\) in the equations for \(U\) and \(V\).  Their direct
integration by standard methods therefore becomes problematic when \(S\)
has small singular values.  Such values occur naturally when the prescribed
rank is larger than the effective rank or when the singular values of the
solution have no distinct gap.  An error estimate is called robust when its
constants and stepsize threshold are independent of small singular values
of the low-rank approximations.

The projector-splitting integrator of Lubich and Oseledets
\cite{LubichOseledets2014} avoids the inverse of the small factor \(S\) by
integrating the three terms of the tangent space projector in turn.
For a known matrix curve, these subproblems can be integrated exactly,
and the resulting \(K\)-\(S\)-\(L\) step depends only on the matrix
increment \(\Delta A\).  We use a numerical approximation of this
increment to construct higher-order methods.

For any prescribed increment, the \(K\)-\(S\)-\(L\) step reduces to a
reduced \(K\)-\(L\) step.  With \(V_0\) an orthonormal basis for the row space
at the beginning of the step, the two factorizations and the output are
\begin{equation}\label{eq:two-factorizations-intro}
 (Y_0+\Delta A)V_0=U_1\widehat S_1,
 \qquad (Y_0+\Delta A)^\top U_1=V_1S_1^\top,
 \qquad Y_1=U_1S_1V_1^\top.
\end{equation}
The algebraic simplification of the core update eliminates the backward
\(S\)-step.  The reduced step requires only products of \(\Delta A\)
and \(\Delta A^\top\) with rank-\(r\) factors.

The projector-splitting integrator has an exactness property
\cite{LubichOseledets2014}.  Suppose that a rank-\(r\) matrix
\(Y_\star\) satisfies the row-space overlap condition
\(Y_\star=(Y_\star V_0)Q\) for some \(Q\in\R^{r\times n}\).
If \(Y_0+\Delta A=Y_\star\), the step gives \(Y_1=Y_\star\).

We prove a relaxed exactness property for approximate increments.  If \(Y_1\)
is the rank-admissible output for a prescribed \(\Delta A\), and the
comparison matrix satisfies \(Y_\star=(Y_\star V_0)Q\), then
\begin{equation}\label{eq:stable-exactness-intro}
 \|Y_1-Y_\star\|\le(1+\|Q\|_2)\|Y_0+\Delta A-Y_\star\|,
\end{equation}
with a constant determined by the overlap matrix \(Q\).  In particular,
\(Y_0+\Delta A=Y_\star\) gives \(Y_1=Y_\star\).

To compare the approximation with the exact solution, we construct a
rank-\(r\) curve satisfying the overlap condition with a uniformly bounded
matrix \(Q\).  Its distance to the exact solution is bounded in terms of
the normal component of \(F\).
For a computation at \(t_n:=t_0+nh\), let \(\Delta A_n\) be the prescribed
increment and let \(A_n(s)\) solve
\(\dot A_n(s)=F(t_n+s,A_n(s))\), \(A_n(0)=Y_n\).
Under the standing assumptions on \(F\), we obtain
\begin{equation}\label{eq:stable-transfer-intro}
 \|Y_{n+1}-A_n(h)\|
 \le C\bigl(\|Y_n+\Delta A_n-A_n(h)\|+h\eps_r\bigr),
\end{equation}
where \(\eps_r\) bounds the normal component and \(C\) is independent of the
singular values of the numerical approximations.

Set
\begin{equation}\label{eq:increment-defect-intro}
 d_h:=\max_{t_{n+1}\le T}\|Y_n+\Delta A_n-A_n(h)\|.
\end{equation}
For every rank-admissible computation, in which the matrices to be
factorized have rank \(r\), the global error satisfies
\begin{equation}\label{eq:general-global-intro}
 \max_{t_n\le T}\|Y_n-A(t_n)\|
 \le C_T\left(\delta+\eps_r+\frac{d_h}{h}\right),
\end{equation}
with \(\delta\) bounding the initial error.  The increment rule enters this
bound only through \(d_h\).

As an application, we use the reduced \(K\)-\(L\) step to compute the stages of
Runge--Kutta (RK) methods.  At each stage, the increment is a linear combination
of previously computed values of \(F\).  The reduced \(K\)-\(L\) step starts
from \((U_n,S_n,V_n)\), and \(F\) is then evaluated at the new stage
approximation.  Every stage therefore uses the row basis \(V_n\), so
\eqref{eq:stable-transfer-intro} applies throughout the calculation.  The explicit
midpoint rule and Heun's three-stage third-order rule then give
\(d_h=O(h^3+h^2\eps_r)\) and \(d_h=O(h^4+h^2\eps_r)\), and hence the global
bound \(C_T(\delta+\eps_r+h^p)\) with \(p=2\) and \(p=3\).
Both methods keep rank \(r\) throughout the computation and require
neither basis augmentation nor rank truncation.

Robust first-order error bounds for projector splitting were proved in
\cite{KieriLubichWalach2016}.  Projected
Runge--Kutta methods of orders two and three use retraction and rank
truncation at the stages \cite{KieriVandereycken2019}.  Other robust
constructions include the basis-update-and-Galerkin (BUG) methods
\cite{CerutiLubich2022}, a rank-adaptive integrator
\cite{CerutiKuschLubich2022}, the midpoint BUG method
\cite{CerutiEinkemmerKuschLubich2024}, parallel low-rank integrators
\cite{Kusch2025}, and high-order Runge--Kutta BUG methods
\cite{NobileRiffaud2026}.  The XL integrator uses a \(K\)-step to update
one basis, augments that basis, and then performs an \(L\)-step
\cite{einkemmer2025asymptotic}.

Runge--Kutta approximations of the matrix increment have also been used in
a single projector-splitting step \cite{HochbruckNeherSchrammer2023}.
Here the reduced \(K\)-\(L\) step is applied at every Runge--Kutta stage.
The \(K\)-\(S\)-\(L\) step also has an extended-retraction interpretation
\cite{AbsilOseledets2015,SeguinCerutiKressner2024}.
The counterexample in \cite{Zhang2026Counterexample} shows that the
state-dependent Strang composition can have a nonzero \(O(h^2)\) local
error under the standard robust assumptions.  The methods studied here
use the exactness property for prescribed increments to obtain higher
robust orders.

Section~\ref{sec:psi} derives the reduced \(K\)-\(L\) step and recalls
the exactness property.  Sections~\ref{sec:increments}--\ref{sec:global}
give the assumptions and error analysis for general numerical increments.
Section~\ref{sec:rk} constructs the Runge--Kutta methods and proves their
orders, and Section~\ref{sec:numerical} presents numerical experiments.
The comparison curve is constructed in Appendix~\ref{app:comparison-curve}.

\section{The projector-splitting integrator}
\label{sec:psi}

\subsection{The splitting}
\label{sec:splitting}

At \(Y=USV^\top\) of rank \(r\), the orthogonal projector onto the tangent
space of the rank-\(r\) manifold, applied to \(Z\in\R^{m\times n}\), is
\begin{equation}\label{eq:tangent-projector}
 P_r(Y)Z=ZVV^\top-UU^\top ZVV^\top+UU^\top Z.
\end{equation}
To derive the projector-splitting integrator, suppose first that the
matrix curve \(A(t)\) is known, and consider
\begin{equation}\label{eq:prescribed-curve-dlra}
 \dot Y(t)=P_r(Y(t))\dot A(t).
\end{equation}
The three terms of \eqref{eq:tangent-projector} are integrated in turn
\cite{LubichOseledets2014}.  Set \(t_1:=t_0+h\).  Over \([t_0,t_1]\),
starting from \(Y_0=U_0S_0V_0^\top\), the factor equations are
\begin{equation}\label{eq:subflows}
 \begin{aligned}
  \dot K(t)&=\dot A(t)V_0,
  &&K(t_0)=U_0S_0,\\
  \dot S(t)&=-U_1^\top\dot A(t)V_0,
  &&S(t_0)=\widehat S_1,\\
  \dot L(t)&=\dot A(t)^\top U_1,
  &&L(t_0)=V_0\widetilde S_0^\top.
 \end{aligned}
\end{equation}
Between the substeps one factorizes \(K(t_1)=U_1\widehat S_1\) and
\(L(t_1)=V_1S_1^\top\), with \(U_1\) and \(V_1\) having orthonormal
columns, and sets \(\widetilde S_0:=S(t_1)\).  The step returns
\(Y_1=U_1S_1V_1^\top\).  These equations avoid \(S^{-1}\).
The middle equation is the backward \(S\)-step arising from the negative
term in \eqref{eq:tangent-projector}.

\subsection{The \texorpdfstring{\(K\)-\(S\)-\(L\)}{K-S-L} step for a prescribed increment}
\label{sec:practical-step}

Set
\begin{equation}\label{eq:exact-matrix-increment}
 \Delta A:=A(t_1)-A(t_0)=\int_{t_0}^{t_1}\dot A(t)\,dt.
\end{equation}
Integrating \eqref{eq:subflows} gives the following algorithm
\cite{LubichOseledets2014}.  It depends on the prescribed curve only through
\(\Delta A\), so the same formulas apply to any given matrix
\(\Delta A\in\R^{m\times n}\).  Starting from
\begin{equation}\label{eq:ksl-start}
 Y_0=U_0S_0V_0^\top,\qquad U_0^\top U_0=V_0^\top V_0=I_r,
 \qquad S_0\ \hbox{invertible},
\end{equation}
the \(K\)-\(S\)-\(L\) step consists of the following calculations.

\begin{enumerate}
\item Set
\begin{equation}\label{eq:ksl-k}
 K_1:=U_0S_0+\Delta A\,V_0.
\end{equation}
Compute a factorization
\begin{equation}\label{eq:ksl-k-factor}
 U_1\widehat S_1=K_1,
\end{equation}
where \(U_1\) has orthonormal columns and \(\widehat S_1\) is invertible.

\item Set
\begin{equation}\label{eq:ksl-s}
 \widetilde S_0:=\widehat S_1-U_1^\top\Delta A\,V_0.
\end{equation}

\item Set
\begin{equation}\label{eq:ksl-l}
 L_1:=V_0\widetilde S_0^\top+\Delta A^\top U_1.
\end{equation}
Compute a factorization
\begin{equation}\label{eq:ksl-l-factor}
 V_1S_1^\top=L_1,
\end{equation}
where \(V_1\) has orthonormal columns and \(S_1\) is invertible.
\end{enumerate}
The output is
\begin{equation}\label{eq:ksl-output}
 Y_1:=U_1S_1V_1^\top.
\end{equation}

\subsection{The reduced \texorpdfstring{\(K\)-\(L\)}{K-L} step}
\label{sec:two-factorizations}

For a prescribed increment, the middle substep can be eliminated
algebraically.  Equations \eqref{eq:ksl-k}--\eqref{eq:ksl-s} give
\begin{equation}\label{eq:ksl-s-simplified}
 \widehat S_1=U_1^\top U_0S_0+U_1^\top\Delta A\,V_0,
 \qquad \widetilde S_0=U_1^\top U_0S_0.
\end{equation}
Substitution in \eqref{eq:ksl-l} yields
\begin{equation}\label{eq:ksl-l-simplified}
 L_1=V_0S_0^\top U_0^\top U_1+\Delta A^\top U_1
     =(Y_0+\Delta A)^\top U_1.
\end{equation}

Consequently, the following reduced \(K\)-\(L\) step gives the same matrix
\(Y_1\):
\begin{enumerate}
\item Set
\begin{equation}\label{eq:two-factor-k}
 K_1:=U_0S_0+\Delta A\,V_0=(Y_0+\Delta A)V_0.
\end{equation}
Compute the factorization
\begin{equation}\label{eq:two-factor-k-factor}
 U_1\widehat S_1=K_1,
\end{equation}
where \(U_1\) has orthonormal columns.

\item Set
\begin{equation}\label{eq:two-factor-l}
 L_1:=V_0S_0^\top U_0^\top U_1+\Delta A^\top U_1
     =(Y_0+\Delta A)^\top U_1.
\end{equation}
Compute the factorization
\begin{equation}\label{eq:two-factor-l-factor}
 V_1S_1^\top=L_1,
\end{equation}
where \(V_1\) has orthonormal columns, and set
\(Y_1:=U_1S_1V_1^\top\).
\end{enumerate}
The calculation is \emph{rank-admissible} when \(\rank K_1=r\).
Indeed,
\begin{equation}\label{eq:second-factor-rank}
 L_1^\top V_0
 =U_1^\top(Y_0+\Delta A)V_0
 =U_1^\top K_1
 =\widehat S_1
 \quad\Longrightarrow\quad \rank L_1=r.
\end{equation}
A computation with several steps or stages is rank-admissible when this
condition holds at every reduced \(K\)-\(L\) step.  Only the products of \(\Delta A\)
and \(\Delta A^\top\) with the rank-\(r\) factors are required.

The two factorizations give
\begin{equation}\label{eq:ksl-old-row-identities}
 Y_1=U_1L_1^\top=U_1U_1^\top(Y_0+\Delta A),
 \qquad U_1U_1^\top(Y_0+\Delta A)V_0=(Y_0+\Delta A)V_0.
\end{equation}

\subsection{Exactness}
\label{sec:exactness}

The exactness property of \cite[Theorem~4.1]{LubichOseledets2014} follows
from \eqref{eq:ksl-old-row-identities}.  It uses the row-space overlap
condition
\begin{equation}\label{eq:old-row-overlap}
 Y_\star=(Y_\star V_0)Q
 \qquad\text{for some }Q\in\R^{r\times n}.
\end{equation}
For a rank-\(r\) matrix \(Y_\star=U_\star S_\star V_\star^\top\), condition
\eqref{eq:old-row-overlap} holds exactly when \(V_\star^\top V_0\) is
invertible, which is the standard overlap condition between the new and old
row spaces.  The matrix \(Q\) is then unique,
\begin{equation}\label{eq:overlap-matrix}
 Q=(V_\star^\top V_0)^{-1}V_\star^\top,
 \qquad
 \|Q\|_2=\frac{1}{\sigma_{\min}(V_\star^\top V_0)},
\end{equation}
and \(\sigma_{\min}(V_\star^\top V_0)\) is the cosine of the largest
principal angle between \(\range(V_\star)\) and \(\range(V_0)\).  Thus
\(\|Q\|_2\) measures how much the two row spaces overlap.

\begin{proposition}
\label{prop:exactness}
Let \(Y_1\) be the rank-admissible output obtained from
\eqref{eq:ksl-start} with prescribed increment \(\Delta A\).  If
\(Y_0+\Delta A\) satisfies \eqref{eq:old-row-overlap}, then
\[
 Y_1=Y_0+\Delta A.
\]
\end{proposition}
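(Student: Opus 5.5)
Set \(Y_\star:=Y_0+\Delta A\), which by hypothesis satisfies \eqref{eq:old-row-overlap}. The argument rests entirely on the two identities in \eqref{eq:ksl-old-row-identities}, which hold for any rank-admissible reduced \(K\)-\(L\) step and hence, by the equivalence established in Section~\ref{sec:two-factorizations}, for the \(K\)-\(S\)-\(L\) output. The first identity gives \(Y_1=U_1U_1^\top Y_\star\). So it suffices to show that \(U_1U_1^\top\) acts as the identity on \(Y_\star\), i.e.\ that the column space of \(Y_\star\) lies in \(\range(U_1)\).

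For this I would use the overlap condition directly. Write \(Y_\star=(Y_\star V_0)Q\). By the second identity in \eqref{eq:ksl-old-row-identities}, \(U_1U_1^\top(Y_\star V_0)=Y_\star V_0\). Multiplying on the right by \(Q\) then gives
\[
 U_1U_1^\top Y_\star=U_1U_1^\top (Y_\star V_0)Q=(Y_\star V_0)Q=Y_\star .
\]
Combining this with \(Y_1=U_1U_1^\top Y_\star\) yields \(Y_1=Y_\star=Y_0+\Delta A\).

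The only point needing care is the justification of \eqref{eq:ksl-old-row-identities} itself, since the whole proof depends on it.
\begin{itemize}
\item The second identity holds because \(K_1=Y_\star V_0=U_1\widehat S_1\) has range contained in \(\range(U_1)\). Rank-admissibility, \(\rank K_1=r\), ensures that the factorization \eqref{eq:two-factor-k-factor} with orthonormal \(U_1\) exists and that \(\range(U_1)=\range(K_1)\). Only the inclusion is actually used.
\item The first identity follows from \(L_1=V_1S_1^\top\): we have \(Y_1=U_1S_1V_1^\top=U_1L_1^\top\), and \(L_1^\top=U_1^\top Y_\star\) by \eqref{eq:two-factor-l}.
\end{itemize}
No real obstacle remains beyond checking that rank-admissibility makes both factorizations well defined; \eqref{eq:second-factor-rank} gives \(\rank L_1=r\), so \(S_1\) is invertible as well.
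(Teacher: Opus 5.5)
Your proof is correct and follows the paper's argument: the paper writes the second identity in \eqref{eq:ksl-old-row-identities} as \((I-U_1U_1^\top)Y_\star V_0=0\), right-multiplies by \(Q\) to get \((I-U_1U_1^\top)Y_\star=0\), and concludes from \(Y_1=U_1U_1^\top Y_\star\), which is your computation written with the complementary projector. One small precision point: a factorization \(K_1=U_1\widehat S_1\) with orthonormal \(U_1\) exists for any \(K_1\), and rank-admissibility is what makes \(\widehat S_1\) invertible; as you note, though, only the inclusion \(\range(K_1)\subseteq\range(U_1)\) is used.
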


\begin{proof}
Write \(Y_\star:=Y_0+\Delta A\) and let \(Q\) be as in
\eqref{eq:old-row-overlap}.  The second identity in
\eqref{eq:ksl-old-row-identities} gives
\((I-U_1U_1^\top)Y_\star V_0=0\), and therefore
\[
 (I-U_1U_1^\top)Y_\star=(I-U_1U_1^\top)(Y_\star V_0)Q=0.
\]
The first identity in \eqref{eq:ksl-old-row-identities} then gives
\(Y_1=U_1U_1^\top Y_\star=Y_\star\).
\end{proof}

\section{Numerical increments}
\label{sec:increments}

A numerical approximation \(\Delta A_n\) to the solution increment
defines a time-stepping method through the reduced \(K\)-\(L\) step.  At step
\(n\), apply \eqref{eq:two-factor-k}--\eqref{eq:two-factor-l-factor} with
\((U_0,S_0,V_0,\Delta A)\) replaced by
\((U_n,S_n,V_n,\Delta A_n)\).
The first-order method in \cite{LubichOseledets2014} uses the Euler
increment \(\Delta A_n=hF(t_n,Y_n)\).  Other numerical approximations of
the increment can be used in the same way.  For the Runge--Kutta methods
below, the reduced \(K\)-\(L\) step also computes the internal stage approximations.

\subsection{Standing assumptions}
\label{sec:assumptions}

Fix \(T>0\) and \(1\le r<\min(m,n)\), and set
\begin{equation}\label{eq:rank-strata}
 \Mr:=\{Y\in\R^{m\times n}:\rank(Y)=r\}.
\end{equation}
The following assumptions hold for all \(0\le t\le T\) and all matrices
of the indicated sizes.

\begin{enumerate}
\item The map \(F:[0,T]\times\R^{m\times n}\to\R^{m\times n}\) is
continuous, uniformly bounded, and globally Lipschitz-continuous in its
matrix argument:
\begin{equation}\label{eq:assumption-BL}
 \|F(t,X)\|\le B,
 \qquad
 \|F(t,X)-F(t,Z)\|\le L_F\|X-Z\|.
\end{equation}

\item For some \(\eps_r\ge0\), the normal component satisfies, for every
\(Y=USV^\top\in\Mr\),
\begin{equation}\label{eq:assumption-normal}
 \|(I-UU^\top)F(t,Y)(I-VV^\top)\|\le\eps_r.
\end{equation}
\end{enumerate}

\subsection{The error in the numerical increment}
\label{sec:defect}

Let \(t_n=t_0+nh\), and suppose that the approximation at step \(n\) is
stored as
\begin{equation}\label{eq:method-start}
 Y_n=U_nS_nV_n^\top,
 \qquad U_n^\top U_n=V_n^\top V_n=I_r,
 \qquad S_n\ \hbox{invertible}.
\end{equation}
At every step, prescribe a matrix
\(\Delta A_n\in\R^{m\times n}\), apply the reduced \(K\)-\(L\) step to
\((U_n,S_n,V_n)\), and denote its output by
\(Y_{n+1}\).  The increment may depend on previously computed stages and
steps.  Let \(A_n(s)\) solve
\begin{equation}\label{eq:increment-local-solution}
 \dot A_n(s)=F(t_n+s,A_n(s)),\qquad A_n(0)=Y_n,
\end{equation}
and define
\begin{equation}\label{eq:increment-defect}
 d_n:=\|Y_n+\Delta A_n-A_n(h)\|,
 \qquad d_h:=\max_{t_{n+1}\le T}d_n.
\end{equation}
Here \(A_n(s)\) is the exact solution at elapsed time \(s\), starting
from \(Y_n\).  Thus \(d_n\) is the error in the increment used at step \(n\).

\section{Relaxed exactness}
\label{sec:stable-exactness}

We first extend Proposition~\ref{prop:exactness} to approximate increments.
The resulting estimate compares the reduced \(K\)-\(L\) approximation \(Y_1\)
with any matrix \(Y_\star\) satisfying \eqref{eq:old-row-overlap}.

\begin{theorem}
\label{thm:stable-exactness}
Let \(Y_1\) be the rank-admissible output obtained from
\eqref{eq:ksl-start} with prescribed increment \(\Delta A\), and let
\(Y_\star\in\R^{m\times n}\) satisfy \eqref{eq:old-row-overlap}.  Then
\begin{equation}\label{eq:ksl-stability}
 \|Y_1-Y_\star\|\le(1+\|Q\|_2)\|Y_0+\Delta A-Y_\star\|.
\end{equation}
\end{theorem}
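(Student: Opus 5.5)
Write \(Z:=Y_0+\Delta A\) and \(E:=Z-Y_\star\), so that the right-hand side of \eqref{eq:ksl-stability} is \((1+\|Q\|_2)\|E\|\). The plan is to mimic the proof of Proposition~\ref{prop:exactness}, keeping track of the perturbation \(E\). By the first identity in \eqref{eq:ksl-old-row-identities}, \(Y_1=U_1U_1^\top Z\). Writing \(P:=U_1U_1^\top\), we split
\[
 Y_1-Y_\star=PZ-Y_\star=P E-(I-P)Y_\star .
\]
The first term is handled at once: \(P\) is an orthogonal projector, so \(\|PE\|\le\|E\|\).

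For the second term we use the overlap condition \eqref{eq:old-row-overlap}:
\[
 (I-P)Y_\star=(I-P)(Y_\star V_0)Q .
\]
Write \(Y_\star V_0=ZV_0-EV_0\). The second identity in \eqref{eq:ksl-old-row-identities} gives \((I-P)ZV_0=0\), and therefore
\[
 (I-P)Y_\star=-(I-P)EV_0Q .
\]
Since \(\|XW\|\le\|X\|\,\|W\|_2\) holds for the Frobenius norm, and since \(I-P\) and \(V_0\) have operator norm at most one, we obtain
\[
 \|(I-P)Y_\star\|\le\|E\|\,\|V_0\|_2\,\|Q\|_2\le\|Q\|_2\|E\| .
\]

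Adding the two bounds by the triangle inequality yields \eqref{eq:ksl-stability}. A sharper Pythagorean bound is also available, because \(PE\) and \((I-P)Y_\star\) are orthogonal; the stated constant does not need it.

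The only point that needs care is the second term. Here the exact identity \((I-P)ZV_0=0\) from the \(K\)-factorization must be used to cancel the unperturbed part, so that only the error \(E\) survives. Rank-admissibility enters only through the validity of \eqref{eq:ksl-old-row-identities}, which relies on \(U_1\) spanning \(\range(K_1)\). Note also that \(Y_\star\) is not required to have rank \(r\): the argument uses only the existence of \(Q\) in \eqref{eq:old-row-overlap}.
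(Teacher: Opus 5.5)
Your proof is correct and follows the paper's argument essentially line for line. The paper also splits \(Y_1-Y_\star=U_1U_1^\top(Y_0+\Delta A-Y_\star)-(I-U_1U_1^\top)Y_\star\) and then uses the overlap condition together with \((I-U_1U_1^\top)(Y_0+\Delta A)V_0=0\) to write the second term as \(-(I-U_1U_1^\top)(Y_0+\Delta A-Y_\star)V_0Q\). Your Pythagorean remark is also valid: the two pieces are Frobenius-orthogonal, which gives the sharper constant \(\sqrt{1+\|Q\|_2^2}\).
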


\begin{proof}
By \eqref{eq:ksl-old-row-identities},
\begin{align}
 Y_1-Y_\star
 &=U_1U_1^\top(Y_0+\Delta A-Y_\star)
   -(I-U_1U_1^\top)Y_\star,                             \label{eq:ksl-proof-1}\\
 (I-U_1U_1^\top)Y_\star
 &=(I-U_1U_1^\top)Y_\star V_0Q
  =(I-U_1U_1^\top)(Y_\star-Y_0-\Delta A)V_0Q.           \label{eq:ksl-proof-2}
\end{align}
Since \(U_1U_1^\top\) is an orthogonal projector and \(\|V_0\|_2=1\),
\eqref{eq:ksl-proof-1}--\eqref{eq:ksl-proof-2} imply
\eqref{eq:ksl-stability}.
\end{proof}

For the one-step estimates, fix \(t_0\) and \(Y_0\), and let \(A(s)\)
denote the exact solution of
\begin{equation}\label{eq:local-exact-solution}
 \dot A(s)=F(t_0+s,A(s)),\qquad A(0)=Y_0,
 \qquad 0\le s\le T-t_0.
\end{equation}
We apply \eqref{eq:ksl-stability} with
\(Y_\star=\widetilde Y(s)\), where the following lemma gives a
rank-\(r\) comparison curve and a uniform bound on \(\|Q_s\|_2\).

\begin{lemma}
\label{lem:comparison-curve}
Under the standing assumptions, there
exists \(h_*>0\), depending only on \(m,n,r\), and \(L_F\), with the following
property.  Let \(Y_0\in\Mr\), let \(V_0\) have orthonormal columns with
\(Y_0=Y_0V_0V_0^\top\), and let \(A\) solve
\eqref{eq:local-exact-solution}.  For \(0\le t_0\le T-h\) and
\(0<h\le h_*\), there is an
absolutely continuous curve \(\widetilde Y:[0,h]\to\Mr\) satisfying
\begin{equation}\label{eq:comparison-rank}
 \widetilde Y(0)=Y_0,
 \qquad \rank\bigl(\widetilde Y(s)\bigr)=r,
\end{equation}
and, for almost every \(0\le s\le h\),
\begin{equation}\label{eq:comparison-residual}
 \|\dot{\widetilde Y}(s)-F(t_0+s,\widetilde Y(s))\|
 \le\sqrt{mn}\,\eps_r.
\end{equation}
Moreover,
\begin{align}
 \|\widetilde Y(s)-A(s)\|
 &\le\sqrt{mn}\,\eps_r s e^{L_Fs},                    \label{eq:comparison-flow}\\
 \widetilde Y(s)&=(\widetilde Y(s)V_0)Q_s,
 \qquad Q_s\in\R^{r\times n},\quad \|Q_s\|_2\le3.   \label{eq:comparison-overlap}
\end{align}
All these bounds are independent of the singular values of \(Y_0\).
\end{lemma}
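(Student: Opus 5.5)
We construct \(\widetilde Y\) as the solution of a rank-\(r\) flow that is not the projected DLRA flow (whose factor equations involve \(S^{-1}\)). Instead we use a flow defined directly on a factorization \(\widetilde Y=\widetilde U\widetilde S\widetilde V^\top\) in which only the tangential part of \(F\) that leaves the row and column spaces is fed into the bases. Concretely, we write \(G(s):=F(t_0+s,\widetilde Y(s))\). We seek \(\widetilde Y(s)=X(s)W(s)^\top\) with \(W(s)\in\R^{n\times r}\) orthonormal, \(W(0)=V_0\) and \(X(0)=Y_0V_0\). The dynamics are
\[
 \dot X=GW,\qquad \dot W=(I-WW^\top)G^\top X(X^\top X)^{-1}.
\]
These are the exact DLRA factor equations, which need \((X^\top X)^{-1}\) and are not robust. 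We therefore prefer a construction in which the overlap and the residual are controlled without inverting small singular values.

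The construction we would actually use defines \(\widetilde Y\) in two stages. First let \(\widehat U(s)\) be an orthonormal basis of the column space, transported by \(\dot{\widehat U}=(I-\widehat U\widehat U^\top)G\,\widehat V\,(\cdot)\), and similarly \(\widehat V(s)\). Then set
\[
 \widetilde Y(s):=\widehat U\widehat U^\top A(s)+A(s)\widehat V\widehat V^\top-\widehat U\widehat U^\top A(s)\widehat V\widehat V^\top,
\]
which is the projection of \(A(s)\) onto a rank-\(\le 2r\) set. This fails to be rank \(r\), so we abandon it.

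The cleanest route is the one suggested by the factor \(\sqrt{mn}\). We replace \(F\) by a modified vector field \(\widetilde F(t,Y):=P_r(Y)F(t,Y)\) and note that \(\|F-\widetilde F\|=\|(I-UU^\top)F(I-VV^\top)\|\le\eps_r\). We cannot, however, integrate \(P_r(Y)F\) robustly through singular points. So instead we perturb \(F\) by a Lipschitz field \(\widetilde F\) with \(\|\widetilde F-F\|\le\sqrt{mn}\,\eps_r\) for which \(\mathcal M_r\) is invariant. The \(\sqrt{mn}\) arises from entrywise bounds (Frobenius versus max norm) when \(\widetilde F\) is built entrywise by a partition-of-unity or mollification argument. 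We then let \(\widetilde Y\) solve \(\dot{\widetilde Y}=\widetilde F(t_0+s,\widetilde Y)\), \(\widetilde Y(0)=Y_0\), which gives \eqref{eq:comparison-residual} immediately. Grönwall with \(A\) and \(\widetilde Y\) sharing the initial value and \(F\) being \(L_F\)-Lipschitz gives \eqref{eq:comparison-flow}: \(\tfrac{d}{ds}\|\widetilde Y-A\|\le L_F\|\widetilde Y-A\|+\sqrt{mn}\,\eps_r\).

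For the overlap \eqref{eq:comparison-overlap}, rank \(r\) and \(\widetilde Y=\widetilde Y\widetilde V\widetilde V^\top\) reduce the claim to \(\sigma_{\min}(\widetilde V(s)^\top V_0)\ge1/3\), by \eqref{eq:overlap-matrix}. We would control the row-space motion through the tangential equation \(\dot{\widetilde Y}=\widetilde F\): the row-space component of \(\widetilde F\) is \(U U^\top\widetilde F(I-VV^\top)\). A row space moving at unbounded speed when \(S\) is small is the danger, and this is the main obstacle. Our first attempt exploits the invariance of \(\range(V_0)\) under \(\dot{\widetilde Y}=\widetilde Y\widetilde V\widetilde V^\top\)-type terms, and we plan to design \(\widetilde F\) so that \(\widetilde F(I-V_0V_0^\top)\) factors as \(\widetilde Y M\) with \(\|M\|\) bounded by \(CB\) or \(CL_F\). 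This would give \(\widetilde Y(s)=\widetilde Y(s)V_0\,(\text{bounded})+O(\widetilde Y\cdot sL_F)\). A Grönwall argument on \(E(s):=\widetilde Y(s)(I-V_0V_0^\top)=\widetilde Y(s)V_0 R(s)\), with \(\|R(s)\|_2\le e^{cL_Fs}-1\le 2\) for \(h\le h_*\), then yields \(Q_s=V_0^\top+R(s)\) with \(\|Q_s\|_2\le3\). The threshold \(h_*\) depends only on \(L_F\) and the dimension constants, and no singular value of \(Y_0\) enters. Constructing such an \(\widetilde F\) is exactly where we expect the real difficulty, deferred to Appendix~\ref{app:comparison-curve}.
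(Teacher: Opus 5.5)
There is a genuine gap. Your final route assumes a Lipschitz field \(\widetilde F\) with three properties: \(\|\widetilde F-F\|\le\sqrt{mn}\,\eps_r\), \(\Mr\) invariant, and \(\widetilde F(I-V_0V_0^\top)=\widetilde YM\) with bounded \(M\). You defer its construction to the very appendix that proves this lemma, so the proposal is circular. That construction is the whole content of the statement, and the tools you name do not supply it. Mollifying or gluing \(F\) by a partition of unity does not preserve tangency to \(\Mr\), so rank invariance is lost. Nothing in your plan explains why a bounded \(M\) should exist when \(Y_0\) has tiny singular values.

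The missing idea is a two-sided decomposition \(F(t,X)=G_LX+XG_R+E\) on all of \(\Mle\), with \(\|G_L\|_2,\|G_R\|_2\le K_G:=(r+\sqrt r)L_F\) and \(\|E\|\le\sqrt{mn}\,\eps_r\). With a compact SVD \(X=\sum_i\sigma_iu_iv_i^\top\), the generators are difference quotients such as \((I-UU^\top)\bigl(F(t,X)-F(t,X-\sigma_iu_iv_i^\top)\bigr)v_i/\sigma_i\). The factor \(1/\sigma_i\) is cancelled by the Lipschitz bound, since \(\|\sigma_iu_iv_i^\top\|=\sigma_i\). Every entry of \([U,U_\perp]^\top E[V,V_\perp]\) is then a normal component of \(F\), either at \(X\) or at \(X\) with one or two singular triples deleted. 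These matrices may have rank below \(r\), so one must first extend \eqref{eq:assumption-normal} to \(\Mle\): complete the rank with \(\tau\sum_\ell\widehat u_\ell\widehat v_\ell^\top\) in normal directions and let \(\tau\to0\). That, not mollification, is where \(\sqrt{mn}\) comes from.

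Such generators need not vary continuously where singular values coalesce. The paper therefore does not integrate a vector field. It solves the differential inclusion \(\dot{\mathcal L}=G_L\mathcal L\), \(\dot{\mathcal R}=\mathcal RG_R\), with \((G_L,G_R)\in\mathcal G(t_0+s,\mathcal LY_0\mathcal R)\), and sets \(\widetilde Y=\mathcal LY_0\mathcal R\). Rank is preserved because Gronwall keeps \(\mathcal L\) and \(\mathcal R\) invertible.

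Your overlap step also fails as stated, even if the one-sided structure is granted. From \(\dot E=\widetilde YM\) you cannot conclude \(E=\widetilde YV_0R\) with small \(R\). The reason is that \(\widetilde YV_0\) is driven by the unconstrained part \(\widetilde FV_0\) and can lose rank.

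For example, take \(r=2\), \(Y_0=u_1v_1^\top+\sigma u_2v_2^\top\), \(V_0=[v_1,v_2]\), a unit vector \(w\perp\range(V_0)\), and the field \(-Bu_2v_2^\top+c\,\widetilde Yv_2w^\top\). It satisfies your condition with \(M=c\,v_2w^\top\). Its trajectory is \(u_1v_1^\top+u_2(av_2+bw)^\top\), with \(a=\sigma-Bs\) and \(b=c(\sigma s-Bs^2/2)\). The field is tangent to the rank-two manifold along this trajectory, and the trajectory keeps rank two. Yet at \(s=\sigma/B\) its row space is \(\operatorname{span}(v_1,w)\), so no \(Q_s\) exists.

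What controls the overlap is the two-sided form, with a right generator bounded on all of \(\R^n\) and not only on its component leaving \(\range(V_0)\). From \(\widetilde Y(s)=\mathcal L(s)Y_0\mathcal R(s)\) and \(Y_0=Y_0V_0V_0^\top\) one gets \(\widetilde Y(s)V_0=\mathcal L(s)Y_0V_0D_s\) with \(D_s=V_0^\top\mathcal R(s)V_0\), hence \(Q_s=D_s^{-1}V_0^\top\mathcal R(s)\). The bound \(\|\mathcal R(s)-I_n\|_2\le e^{K_Gs}-1\le\frac12\) for \(h\le h_*\) then gives \(\|Q_s\|_2\le3\), whatever \(\mathcal L\) does. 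Your Gronwall argument for \eqref{eq:comparison-flow} is fine once a curve satisfying \eqref{eq:comparison-residual} exists.
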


The proof is given in Appendix~\ref{app:comparison-curve}.

Combining Theorem~\ref{thm:stable-exactness} and
Lemma~\ref{lem:comparison-curve}
gives the following one-step error estimate.

\begin{theorem}
\label{thm:accuracy-transfer}
Under the standing assumptions, let
\(Y_0=U_0S_0V_0^\top\in\Mr\) and let \(A\) solve
\eqref{eq:local-exact-solution}.  For \(0\le t_0\le T-s\) and
\(0\le s\le h_*\), let \(Y_1\) be the rank-admissible output for a
prescribed increment \(\Delta A\).  Then
\begin{equation}\label{eq:accuracy-transfer}
 \|Y_1-A(s)\|
 \le4\|Y_0+\Delta A-A(s)\|
   +5\sqrt{mn}\,\eps_r s e^{L_Fs}.
\end{equation}
\end{theorem}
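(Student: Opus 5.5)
The plan is to apply Theorem~\ref{thm:stable-exactness} with the comparison matrix \(Y_\star:=\widetilde Y(s)\) from Lemma~\ref{lem:comparison-curve}, and then pass from \(\widetilde Y(s)\) to \(A(s)\) by the triangle inequality.

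First I check that the lemma applies. Since \(Y_0=U_0S_0V_0^\top\) with \(V_0^\top V_0=I_r\), we have \(Y_0V_0V_0^\top=Y_0\) and \(Y_0\in\Mr\). The hypotheses \(0\le t_0\le T-s\) and \(0\le s\le h_*\) allow us to use the lemma with \(h:=s\). The case \(s=0\) is trivial: take \(\widetilde Y(0)=Y_0=(Y_0V_0)V_0^\top\), so \(Q=V_0^\top\) with \(\|Q\|_2=1\le 3\), and the estimate below still holds. The lemma then gives the curve \(\widetilde Y\) on \([0,s]\), with
\[
\widetilde Y(s)=(\widetilde Y(s)V_0)Q_s,\qquad \|Q_s\|_2\le3,
\]
and
\[
\|\widetilde Y(s)-A(s)\|\le\sqrt{mn}\,\eps_r se^{L_Fs}.
\]

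Next, Theorem~\ref{thm:stable-exactness} with \(Y_\star=\widetilde Y(s)\) and \(Q=Q_s\) yields
\[
\|Y_1-\widetilde Y(s)\|\le 4\|Y_0+\Delta A-\widetilde Y(s)\|.
\]
I bound the right-hand side by the triangle inequality:
\[
\|Y_0+\Delta A-\widetilde Y(s)\|\le\|Y_0+\Delta A-A(s)\|+\|A(s)-\widetilde Y(s)\|.
\]
Finally,
\[
\|Y_1-A(s)\|\le\|Y_1-\widetilde Y(s)\|+\|\widetilde Y(s)-A(s)\|.
\]
Combining these, the term \(\|\widetilde Y(s)-A(s)\|\) appears with coefficient \(4+1=5\). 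Inserting the lemma's bound \eqref{eq:comparison-flow} gives exactly \eqref{eq:accuracy-transfer}.

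There is no real obstacle here, since all the difficulty sits in Lemma~\ref{lem:comparison-curve}. The only point needing care is matching the lemma's parameters: its horizon \(h\) becomes the present \(s\), and the interval condition \(t_0\le T-h\) must hold. Both follow directly from the stated hypotheses.
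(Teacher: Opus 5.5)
Your proposal is correct and matches the paper's proof: apply Theorem~\ref{thm:stable-exactness} with \(Y_\star=\widetilde Y(s)\) and \(\|Q_s\|_2\le3\), then use the triangle inequality twice to obtain the coefficients \(4\) and \(5\). Your separate handling of \(s=0\) is a small extra precaution that the paper omits, since Lemma~\ref{lem:comparison-curve} is stated only for \(h>0\).
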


\begin{proof}
Apply Lemma~\ref{lem:comparison-curve} to \(Y_0\) and \(V_0\).
Equations \eqref{eq:ksl-stability} and
\eqref{eq:comparison-overlap} give
\[
 \|Y_1-\widetilde Y(s)\|
 \le4\|Y_0+\Delta A-\widetilde Y(s)\|.
\]
Hence
\begin{align*}
 \|Y_1-A(s)\|
 &\le\|Y_1-\widetilde Y(s)\|+\|\widetilde Y(s)-A(s)\|\\
 &\le4\|Y_0+\Delta A-A(s)\|+5\|\widetilde Y(s)-A(s)\|\\
 &\le4\|Y_0+\Delta A-A(s)\|+5\sqrt{mn}\,\eps_r s e^{L_Fs}.
\end{align*}
\end{proof}

In particular,
\begin{equation}\label{eq:accuracy-transfer-consequence}
 \|Y_0+\Delta A-A(s)\|=O(h^q)
 \quad\Longrightarrow\quad
 \|Y_1-A(s)\|=O(h^q+h\eps_r)
\end{equation}
for \(0\le s\le h\le h_*\).  If \(\eps_r=0\) and the prescribed increment is
exact, \eqref{eq:accuracy-transfer} reduces to
Proposition~\ref{prop:exactness}.

At step \(n\), the theorem applies with \(Y_0,\Delta A,Y_1,A(s)\)
replaced by \(Y_n,\Delta A_n,Y_{n+1},A_n(s)\), respectively.

\section{Global error analysis}
\label{sec:global}

We use Theorem~\ref{thm:accuracy-transfer} to bound the global error in
terms of the increment error \(d_h\) defined in \eqref{eq:increment-defect}.

\begin{theorem}
\label{thm:general-increments}
Under the standing assumptions, let
\(A(t)\) solve \eqref{eq:full-ode} on \([t_0,T]\).  Suppose that
\(Y_0=U_0S_0V_0^\top\in\Mr\), with \(U_0,V_0\) having orthonormal columns
and \(S_0\) invertible, satisfies
\begin{equation}\label{eq:initial-error}
 \|Y_0-A(t_0)\|\le\delta.
\end{equation}
There exist \(h_0>0\) and \(C_T>0\), depending only on
\(m,n,r,B,L_F\), and \(T\), such that every rank-admissible sequence generated
by prescribed increments \(\Delta A_n\), with \(0<h\le h_0\), satisfies
\begin{equation}\label{eq:general-increment-global-error}
 \max_{t_n\le T}\|Y_n-A(t_n)\|
 \le C_T\left(\delta+\eps_r+\frac{d_h}{h}\right).
\end{equation}
In particular, if \(p\ge1\) and \(C_{\rm inc}\) is independent of \(h\),
\begin{equation}\label{eq:increment-order-transfer}
 d_h\le C_{\rm inc}(h^{p+1}+h\eps_r)
 \quad\Longrightarrow\quad
 \max_{t_n\le T}\|Y_n-A(t_n)\|
 \le C_T'(\delta+\eps_r+h^p).
\end{equation}
Here \(C_T'\) may also depend on \(C_{\rm inc}\).  The constants are
independent of the singular values along the numerical trajectory.
\end{theorem}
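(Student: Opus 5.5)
The plan is a standard Lady Windermere's fan argument: apply Theorem~\ref{thm:accuracy-transfer} at every step to control the local error, and transport the local errors to the final time with the Lipschitz stability of the exact flow. Choose \(h_0:=h_*\) from Lemma~\ref{lem:comparison-curve}; this depends only on \(m,n,r,L_F\). Let \(\Phi_{t,\tau}\) denote the exact flow of \eqref{eq:full-ode} from time \(\tau\) to time \(t\). Since \(F\) is globally Lipschitz, Gronwall's lemma gives
\begin{equation*}
 \|\Phi_{t,\tau}(X)-\Phi_{t,\tau}(Z)\|\le e^{L_F(t-\tau)}\|X-Z\|
\end{equation*}
for all \(X,Z\) and \(\tau\le t\le T\). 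Note that \(A_n(h)=\Phi_{t_{n+1},t_n}(Y_n)\).

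For the local error, fix a step \(n\) with \(t_{n+1}\le T\). We have \(Y_n\in\Mr\) with orthonormal \(V_n\), and rank-admissibility guarantees that \(Y_{n+1}\) is again of rank \(r\), so every step starts from a point of \(\Mr\). Apply Theorem~\ref{thm:accuracy-transfer} with \(s=h\le h_*\), initial time \(t_n\), starting value \(Y_n\) and increment \(\Delta A_n\). This gives
\begin{equation*}
 \ell_n:=\|Y_{n+1}-A_n(h)\|\le 4d_n+5\sqrt{mn}\,\eps_r h e^{L_Fh}\le 4d_h+5\sqrt{mn}\,e^{L_FT}\eps_r h .
\end{equation*}
The bound does not involve the singular values of \(Y_n\), because the constants \(4\) and \(5\) come from \(\|Q_s\|_2\le3\) in Lemma~\ref{lem:comparison-curve}.

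For the error propagation, write the telescoping identity
\begin{equation*}
 Y_N-A(t_N)=\Phi_{t_N,t_0}(Y_0)-\Phi_{t_N,t_0}(A(t_0))+\sum_{n=0}^{N-1}\Bigl(\Phi_{t_N,t_{n+1}}(Y_{n+1})-\Phi_{t_N,t_{n+1}}\bigl(\Phi_{t_{n+1},t_n}(Y_n)\bigr)\Bigr).
\end{equation*}
The flow stability estimate then yields
\begin{equation*}
 \|Y_N-A(t_N)\|\le e^{L_FT}\delta+e^{L_FT}\sum_{n<N}\ell_n\le e^{L_FT}\delta+e^{L_FT}\frac{T}{h}\Bigl(4d_h+5\sqrt{mn}\,e^{L_FT}\eps_r h\Bigr),
\end{equation*}
using \(N\le T/h\). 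This is exactly \eqref{eq:general-increment-global-error} with
\begin{equation*}
 C_T:=\max\bigl\{e^{L_FT},\,4Te^{L_FT},\,5\sqrt{mn}\,Te^{2L_FT}\bigr\}.
\end{equation*}
For the corollary \eqref{eq:increment-order-transfer}, insert \(d_h/h\le C_{\rm inc}(h^p+\eps_r)\) into this bound.

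The only delicate point is verifying the hypotheses of Theorem~\ref{thm:accuracy-transfer} at every step independently of the singular values. The required time window is \(t_n\le T-h\), which holds because we only consider steps with \(t_{n+1}\le T\). We also need \(Y_n\in\Mr\) with its stored orthonormal \(V_n\), so that \(Y_n=Y_nV_nV_n^\top\); this follows from \(S_n\) being invertible, which rank-admissibility provides inductively. Everything else is a routine Gronwall argument. The constant \(B\) is not actually needed here, though the statement allows the constants to depend on it.
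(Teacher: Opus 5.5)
Your proof is correct and is essentially the paper's argument. Theorem~\ref{thm:accuracy-transfer} with $s=h$ bounds each local error by $4d_n+5\sqrt{mn}\,\eps_r h e^{L_Fh}$, and Lipschitz stability of the exact flow propagates these errors; your Lady Windermere's fan telescoping is just the unrolled form of the paper's recurrence $e_{n+1}\le 4d_n+5\sqrt{mn}\,\eps_r h e^{L_Fh}+e^{L_Fh}e_n$, and bounding $e^{L_Fh}\le e^{L_FT}$ instead of imposing $h_0\le 1$ is harmless because $C_T$ may depend on $T$.
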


\begin{proof}
Set \(e_n:=\|Y_n-A(t_n)\|\).  Theorem~\ref{thm:accuracy-transfer}, applied
to \(\Delta A_n\) with \(s=h\), and the Lipschitz stability of
\eqref{eq:full-ode} give
\begin{align}
 e_{n+1}
 &\le \|Y_{n+1}-A_n(h)\|+\|A_n(h)-A(t_{n+1})\| \notag\\
 &\le 4d_n+5\sqrt{mn}\,\eps_rh e^{L_Fh}+e^{L_Fh}e_n.
 \label{eq:general-increment-recurrence}
\end{align}
Consequently,
\begin{equation}\label{eq:general-increment-iteration}
 e_n\le e^{L_Fnh}\delta+
 \sum_{k=0}^{n-1}e^{L_F(n-1-k)h}
 \left(4d_k+5\sqrt{mn}\,\eps_rh e^{L_Fh}\right).
\end{equation}
For \(t_n\le T\), \(n\le T/h\) and every exponential in the sum is at
most \(e^{L_FT}\).  Choosing \(h_0\le\min(1,h_*)\) in
\eqref{eq:general-increment-iteration} proves
\eqref{eq:general-increment-global-error}.  Substitution of the hypothesis
in \eqref{eq:increment-order-transfer} proves its conclusion.
\end{proof}

\section{Runge--Kutta examples}
\label{sec:rk}

We now construct Runge--Kutta methods in which the reduced \(K\)-\(L\) step
computes each internal stage and the final approximation.  The
Runge--Kutta coefficients determine the increments, and \(F\) is
evaluated at the resulting low-rank stage approximations.
Every reduced \(K\)-\(L\) step starts from the factors \((U_n,S_n,V_n)\).

\subsection{The Runge--Kutta scheme and its error bound}
\label{sec:rk-criterion}

For an explicit Runge--Kutta method with \(s_{\mathrm{RK}}\) stages,
write its tableau as
\[
 \begin{array}{c|ccc}
  c_1&a_{11}&\cdots&a_{1s_{\mathrm{RK}}}\\
  \vdots&\vdots&&\vdots\\
  c_{s_{\mathrm{RK}}}&a_{s_{\mathrm{RK}}1}&\cdots&a_{s_{\mathrm{RK}}s_{\mathrm{RK}}}\\ \hline
   &b_1&\cdots&b_{s_{\mathrm{RK}}}
 \end{array},
 \qquad a_{ij}=0\ (j\ge i),\qquad
 c_i=\sum_{j<i}a_{ij}.
\]
Here \(c_i\in[0,1]\) specifies the stage time and \(b_i\) is its final
weight.  Starting with \(Y_{n,1}:=Y_n\), the stage increments and final
increment are
\begin{equation}\label{eq:common-base-increments}
 \begin{aligned}
  \Delta A_{n,i}
  &:=h\sum_{j=1}^{i-1}a_{ij}F(t_n+c_jh,Y_{n,j}),
  &&2\le i\le s_{\mathrm{RK}},\\
  \Delta A_n
  &:=h\sum_{j=1}^{s_{\mathrm{RK}}}b_jF(t_n+c_jh,Y_{n,j}).
 \end{aligned}
\end{equation}
For each \(i=2,\ldots,s_{\mathrm{RK}}\), apply the reduced \(K\)-\(L\) step to
\((U_n,S_n,V_n)\) with
\(\Delta A_{n,i}\), obtaining \(Y_{n,i}\), and then evaluate
\(F(t_n+c_i h,Y_{n,i})\).  After all stages, the same calculation with
\(\Delta A_n\) gives \(Y_{n+1}\).

\begin{algorithm}[htbp]
\caption{One step of the projector-splitting Runge--Kutta method.}
\label{alg:common-base-rk}
\begin{algorithmic}[1]
\Require \(t_n,h\), the factors in \eqref{eq:method-start}, and
the explicit tableau \((a_{ij},b_i,c_i)\)
\State Set \(Y_{n,1}:=Y_n\) and \(F_{n,1}:=F(t_n,Y_n)\).
\For{\(i=2,\ldots,s_{\mathrm{RK}}\)}
 \State Set \(\Delta A_{n,i}:=h\sum_{j=1}^{i-1}a_{ij}F_{n,j}\).
 \State Apply the reduced \(K\)-\(L\) step \eqref{eq:two-factor-k}--\eqref{eq:two-factor-l-factor}
 to \((U_n,S_n,V_n)\) with increment \(\Delta A_{n,i}\), and denote
 its output by \(Y_{n,i}\).
 \State Set \(F_{n,i}:=F(t_n+c_i h,Y_{n,i})\).
\EndFor
\State Set \(\Delta A_n:=h\sum_{i=1}^{s_{\mathrm{RK}}}b_iF_{n,i}\).
\State Apply the reduced \(K\)-\(L\) step to \((U_n,S_n,V_n)\) with increment
\(\Delta A_n\).
\Ensure Its output \(Y_{n+1}=U_{n+1}S_{n+1}V_{n+1}^\top\).
\end{algorithmic}
\end{algorithm}
\FloatBarrier

For the analysis, let \(A_n\) be the exact solution of
\eqref{eq:increment-local-solution} and set
\begin{equation}\label{eq:flow-field-defined}
 g_n(s):=F(t_n+s,A_n(s))=\dot A_n(s).
\end{equation}
Theorem~\ref{thm:accuracy-transfer} applies to every rank-admissible
stage in Algorithm~\ref{alg:common-base-rk}.  The following theorem
combines the stage estimates with the final Runge--Kutta weights.

\begin{theorem}
\label{thm:rk-stage-criterion}
Under the standing assumptions, fix a
positive integer \(p\) and an explicit tableau as in
\eqref{eq:common-base-increments}, with real coefficients independent of
\(h\), \(n\), and the singular values.  Suppose that
\begin{equation}\label{eq:rk-quadrature-moments}
 \sum_{i=1}^{s_{\mathrm{RK}}}b_i=1,\qquad
 \sum_{i=1}^{s_{\mathrm{RK}}}b_i c_i^k=\frac1{k+1}
 \quad(1\le k\le p-1).
\end{equation}
For \(0<h\le h_0\le\min(1,h_*)\) and \(0\le t_n\le T-h\), consider a
rank-admissible computation of Algorithm~\ref{alg:common-base-rk} from
\(Y_n\in\Mr\).  Assume that \(g_n\in C^p([0,h])\), with
\begin{equation}\label{eq:rk-criterion-assumptions}
 \sup_{0\le s\le h}\|g_n^{(p)}(s)\|\le M_p,\qquad
 \|Y_{n,i}-A_n(c_i h)\|\le C_{\rm st}(h^p+h\eps_r)
 \quad\text{if }b_i\ne0.
\end{equation}
The bounds \(M_p,C_{\rm st}\) hold uniformly for \(0<h\le h_0\), and
\(h_0,M_p,C_{\rm st}\) are independent of the singular values.  Then
\begin{equation}\label{eq:rk-criterion-local}
 d_n\le C(h^{p+1}+h^2\eps_r),\qquad
 \|Y_{n+1}-A_n(h)\|\le C(h^{p+1}+h\eps_r).
\end{equation}
If the same \(h_0,M_p,C_{\rm st}\) apply at every step, then for the
solution \(A(t)\) of \eqref{eq:full-ode} under the initial-error assumption
\eqref{eq:initial-error},
\[
 \max_{t_n\le T}\|Y_n-A(t_n)\|
 \le C_T(\delta+\eps_r+h^p).
\]
The constant \(C\) may depend on \(m,n,r,B,L_F,p\), the fixed tableau,
and \(M_p,C_{\rm st}\); \(C_T\) may also depend on \(T\).
Both are independent of the singular values of the numerical approximations.
\end{theorem}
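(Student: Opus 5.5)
We split the increment defect $d_n=\|Y_n+\Delta A_n-A_n(h)\|$ into a quadrature error and a stage-propagation error. Since $A_n(h)-Y_n=\int_0^h g_n(s)\,ds$, we write
\[
 Y_n+\Delta A_n-A_n(h)
 =h\sum_{i=1}^{s_{\mathrm{RK}}}b_i\bigl(F(t_n+c_ih,Y_{n,i})-g_n(c_ih)\bigr)
 +\Bigl(h\sum_{i=1}^{s_{\mathrm{RK}}}b_ig_n(c_ih)-\int_0^h g_n(s)\,ds\Bigr).
\]
We treat the two terms separately.

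For the second term we use the moment conditions \eqref{eq:rk-quadrature-moments}. They say that the quadrature rule with nodes $c_i$ and weights $b_i$ integrates polynomials of degree $\le p-1$ exactly on $[0,1]$. Scaled to $[0,h]$, it integrates exactly the Taylor polynomial of $g_n$ of degree $p-1$ about $0$. The Taylor remainder of $g_n$ is bounded by $M_p s^p/p!$. Applied both to the integral and to the weighted sum, this gives a bound $C(1+\sum_i|b_i|)M_ph^{p+1}$ with $C$ depending only on $p$. This uses only $g_n\in C^p([0,h])$ and $c_i\in[0,1]$.

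For the first term we use the Lipschitz bound \eqref{eq:assumption-BL} together with $g_n(c_ih)=F(t_n+c_ih,A_n(c_ih))$. Each summand with $b_i\ne0$ is at most $L_F\|Y_{n,i}-A_n(c_ih)\|\le L_FC_{\rm st}(h^p+h\eps_r)$, and terms with $b_i=0$ vanish. Multiplying by $h$ gives $L_FC_{\rm st}\sum|b_i|\,(h^{p+1}+h^2\eps_r)$. The stage $i=1$ is trivially exact since $Y_{n,1}=Y_n=A_n(0)$. Together the two terms give $d_n\le C(h^{p+1}+h^2\eps_r)$.

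The local estimate for $Y_{n+1}$ follows from Theorem~\ref{thm:accuracy-transfer} with $s=h\le h_0\le h_*$. The computation is rank-admissible, and the $K$-$L$ step for $\Delta A_n$ starts from $(U_n,S_n,V_n)$, so the theorem applies. It gives $\|Y_{n+1}-A_n(h)\|\le4d_n+5\sqrt{mn}\,\eps_rhe^{L_Fh}\le C(h^{p+1}+h\eps_r)$, using $h\le1$ to absorb $h^2\eps_r$ into $h\eps_r$.

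For the global bound, the uniformity assumption gives $d_h\le C(h^{p+1}+h^2\eps_r)\le C(h^{p+1}+h\eps_r)$. Theorem~\ref{thm:general-increments}, through \eqref{eq:increment-order-transfer}, then yields $C_T(\delta+\eps_r+h^p)$.

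The main point needing care is not a hard estimate but bookkeeping. The stage hypothesis in \eqref{eq:rk-criterion-assumptions} is assumed rather than proved here, so the argument is a direct combination of results already available. The only delicate part is confirming that every constant depends on the listed quantities alone and not on singular values. This holds because the only singular-value-sensitive ingredient, the comparison curve, enters through Theorem~\ref{thm:accuracy-transfer} with its uniform constants $4$ and $5$.
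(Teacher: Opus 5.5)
Your proposal is correct and follows the paper's proof essentially step for step. You use the same splitting of \(Y_n+\Delta A_n-A_n(h)\) into a stage-propagation term bounded by \(hL_F\sum_i|b_i|\,\|Y_{n,i}-A_n(c_ih)\|\) and a quadrature term bounded by Taylor expansion plus the moment conditions, then Theorem~\ref{thm:accuracy-transfer} with \(s=h\) for the local bound, and Theorem~\ref{thm:general-increments} (using \(h\le1\) to absorb \(h^2\eps_r\) into \(h\eps_r\)) for the global bound.
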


\begin{proof}
Since \(A_n(h)=Y_n+\int_0^h g_n(s)\,ds\),
\begin{align}
 Y_n+\Delta A_n-A_n(h)
 &=h\sum_{i=1}^{s_{\mathrm{RK}}}b_i
   \bigl(F(t_n+c_i h,Y_{n,i})-g_n(c_i h)\bigr)\notag\\
 &\quad+h\sum_{i=1}^{s_{\mathrm{RK}}}b_i g_n(c_i h)
       -\int_0^h g_n(s)\,ds.
 \label{eq:rk-criterion-decomposition}
\end{align}
Taylor expansion of \(g_n\) through degree \(p-1\), followed by
\eqref{eq:rk-quadrature-moments}, gives
\begin{equation}\label{eq:rk-criterion-quadrature}
 \left\|h\sum_{i=1}^{s_{\mathrm{RK}}}b_i g_n(c_i h)
        -\int_0^h g_n(s)\,ds\right\|
 \le \frac{M_p}{p!}
 \left(\frac1{p+1}+\sum_{i=1}^{s_{\mathrm{RK}}}|b_i|c_i^p\right)h^{p+1}.
\end{equation}
Consequently,
\begin{align}
 d_n
 &\le C h^{p+1}
   +hL_F\sum_{i=1}^{s_{\mathrm{RK}}}|b_i|
           \|Y_{n,i}-A_n(c_i h)\|\notag\\
 &\le C h^{p+1}
   +L_F C_{\rm st}\sum_{i=1}^{s_{\mathrm{RK}}}|b_i|
        (h^{p+1}+h^2\eps_r)
 \le C(h^{p+1}+h^2\eps_r).
 \label{eq:rk-criterion-defect}
\end{align}
The local error estimate follows from Theorem~\ref{thm:accuracy-transfer}.
If the hypotheses hold uniformly over the steps, Theorem
\ref{thm:general-increments} gives the global estimate.
\end{proof}

For the two examples below, let \(p\in\{2,3\}\) and assume
\begin{equation}\label{eq:assumption-smoothness}
 \begin{aligned}
  F&\in C^p\bigl([0,T]\times\R^{m\times n};\R^{m\times n}\bigr),\\
  \|\partial_t^aD_X^bF(t,X)\|&\le M,
  \qquad a,b\in\mathbb N_0,\quad 1\le a+b\le p.
 \end{aligned}
\end{equation}
Here \(D_X^bF\) is the \(b\)th Fr\'echet derivative with respect to \(X\),
with the multilinear operator norm induced by the Frobenius norm.

\begin{lemma}
\label{lem:flow-derivatives}
Under the standing assumptions and
\eqref{eq:assumption-smoothness}, there are constants \(M_1,\ldots,M_p\),
depending only on \(B,L_F,M\), such that
\begin{equation}\label{eq:flow-derivative-bounds}
 \|g_n^{(q)}(s)\|\le M_q,
 \qquad 1\le q\le p,\quad 0\le s\le h.
\end{equation}
These bounds are uniform over all \(0\le t_n\le T-h\) and initial
matrices \(Y_n\).
\end{lemma}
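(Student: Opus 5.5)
The plan is to differentiate the identity \(g_n(s)=F(t_n+s,A_n(s))\) repeatedly along the exact flow, using \(\dot A_n=g_n\). Each derivative of \(g_n\) then becomes a finite sum of terms built from partial derivatives of \(F\) and lower-order derivatives of \(g_n\). The bounds then follow by induction on \(q\), using only \eqref{eq:assumption-BL} and \eqref{eq:assumption-smoothness}. Since none of these quantities involves \(Y_n\) except through the trajectory, and all derivative bounds on \(F\) are global, the constants are automatically uniform in \(t_n\) and \(Y_n\).

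\emph{First derivative.} By the chain rule,
\[
 g_n'(s)=\partial_tF(t_n+s,A_n(s))+D_XF(t_n+s,A_n(s))[g_n(s)].
\]
Since \(\|g_n(s)\|\le B\) and the derivatives of \(F\) of order one are bounded by \(M\), this gives \(\|g_n'\|\le M(1+B)=:M_1\). (Alternatively, \(\|D_XF\|\le L_F\) may be used for the second term.)

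\emph{General step.} I would prove by induction that, for \(1\le q\le p\), \(g_n^{(q)}(s)\) is a finite sum, with combinatorial coefficients depending only on \(q\), of terms
\[
 \partial_t^aD_X^bF(t_n+s,A_n(s))\bigl[g_n^{(k_1)}(s),\ldots,g_n^{(k_b)}(s)\bigr],
 \qquad 1\le a+b\le q,\quad a+b+\sum_j k_j=q,\quad k_j\ge0.
\]
This is the Fa\`a di Bruno structure. To pass from \(q\) to \(q+1\), differentiate each such term in \(s\). The derivative acting on the coefficient of \(F\) produces \(\partial_t^{a+1}D_X^bF\) or \(\partial_t^aD_X^{b+1}F[\ldots,g_n]\). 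The derivative acting on an argument \(g_n^{(k_j)}\) raises \(k_j\) by one. Both kinds of term preserve the form with total order \(q+1\). The differentiation is legitimate because \(F\in C^p\), and it follows inductively that \(A_n\in C^{q+1}\).

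\emph{Bounding the terms.} Multilinearity and the operator-norm bound in \eqref{eq:assumption-smoothness} give
\[
 \|\text{term}\|\le M\prod_j M_{k_j},
\]
where \(M_0:=B\). Every \(k_j<q\), because \(a+b\ge1\). Hence \(M_q\) is a polynomial in \(B\) and \(M\), determined by \(M_0,\ldots,M_{q-1}\). Since \(p\le3\), the expansions could even be written out explicitly. For instance,
\[
 g_n''=\partial_t^2F+2\partial_tD_XF[g_n]+D_X^2F[g_n,g_n]+D_XF[g_n'],
\]
which is bounded by \(M(1+2B+B^2+M_1)\).

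\emph{Main obstacle.} The only delicate point is regularity: the top-order derivatives \(\partial_t^aD_X^bF\) with \(a+b=p\) must be evaluated and must be continuous. The class \(F\in C^p\) covers exactly this, giving \(g_n\in C^p([0,h])\). No singular values enter at any stage, since \(A_n\) is the full flow and not a low-rank quantity.
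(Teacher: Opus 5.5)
Your proof is correct and follows the same route as the paper: differentiate \(g_n(s)=F(t_n+s,A_n(s))\) along the flow using \(\dot A_n=g_n\), then bound the resulting chain-rule expressions successively from \(\|g_n\|\le B\) and the derivative bounds on \(F\). The paper writes out \(g_n'\), \(g_n''\), \(g_n'''\) explicitly for \(p\le3\), whereas you package the same computation as a general Fa\`a di Bruno-type induction and spell out the regularity; nothing of substance changes.
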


\begin{proof}
All derivatives of \(F\) below are evaluated at
\((t_n+s,A_n(s))\).  For \(p\ge2\),
\begin{align}
 g_n'&=\partial_tF+D_XF[g_n],                         \label{eq:g-first}\\
 g_n''&=\partial_t^2F+2D_X\partial_tF[g_n]+D_X^2F[g_n,g_n]
          +D_XF[g_n'].                              \label{eq:g-second}
\end{align}
For \(p=3\), additionally,
\begin{align}
 g_n'''&=\partial_t^3F+3D_X\partial_t^2F[g_n]
 +3D_X^2\partial_tF[g_n,g_n]+D_X^3F[g_n,g_n,g_n]\notag\\
 &\quad+3D_X\partial_tF[g_n']+3D_X^2F[g_n,g_n']+D_XF[g_n''].
                                                     \label{eq:g-third}
\end{align}
Since \(\|g_n\|\le B\) and \(\|D_XF\|\le L_F\), the bounds follow
successively from these identities.
\end{proof}

\subsection{The explicit midpoint method}
\label{sec:midpoint}

We use the explicit midpoint Runge--Kutta method
\begin{equation}\label{eq:midpoint-tableau}
\begin{array}{c|cc}
0&0&0\\
\frac12&\frac12&0\\ \hline
&0&1
\end{array}.
\end{equation}

Starting from \eqref{eq:method-start}, first set
\begin{equation}\label{eq:midpoint-stage-one}
 F_{n,1}:=F(t_n,Y_n).
\end{equation}
Apply the reduced \(K\)-\(L\) step to
\((U_n,S_n,V_n)\) with
\begin{equation}\label{eq:midpoint-stage-two-increment}
 \Delta A_{n,2}:=\frac h2F_{n,1}.
\end{equation}
Denote the midpoint approximation and the corresponding value of \(F\) by
\begin{equation}\label{eq:midpoint-stage-two}
 Y_{n,2}:=U_{n,2}S_{n,2}V_{n,2}^\top,
 \qquad F_{n,2}:=F\left(t_n+\frac h2,Y_{n,2}\right).
\end{equation}
Restart from \((U_n,S_n,V_n)\) and apply the same calculation with
\begin{equation}\label{eq:midpoint-final-increment}
 \Delta A_n:=hF_{n,2}.
\end{equation}
Its output is
\begin{equation}\label{eq:midpoint-final-output}
 Y_{n+1}:=U_{n+1}S_{n+1}V_{n+1}^\top.
\end{equation}
Algorithm~\ref{alg:midpoint} gives the factor calculations.

\begin{algorithm}[htbp]
\caption{One step of the projector-splitting explicit midpoint method.}
\label{alg:midpoint}
\begin{algorithmic}[1]
\Require \(t_n,h\), and \(Y_n=U_nS_nV_n^\top\) as in
\eqref{eq:method-start}
\State Set \(F_{n,1}:=F(t_n,Y_n)\).
\State Set \(K_{n,2}:=U_nS_n+\frac h2F_{n,1}V_n\).
\State Compute \(U_{n,2}\widehat S_{n,2}=K_{n,2}\), with
\(U_{n,2}^\top U_{n,2}=I_r\).
\State Set
\(L_{n,2}:=V_nS_n^\top U_n^\top U_{n,2}
+\frac h2F_{n,1}^\top U_{n,2}\).
\State Compute \(V_{n,2}S_{n,2}^\top=L_{n,2}\), with
\(V_{n,2}^\top V_{n,2}=I_r\).
\State Set \(Y_{n,2}:=U_{n,2}S_{n,2}V_{n,2}^\top\) and
\(F_{n,2}:=F(t_n+\frac h2,Y_{n,2})\).
\State Restart from \((U_n,S_n,V_n)\) and set
\(K_{n+1}:=U_nS_n+hF_{n,2}V_n\).
\State Compute \(U_{n+1}\widehat S_{n+1}=K_{n+1}\), with
\(U_{n+1}^\top U_{n+1}=I_r\).
\State Set
\(L_{n+1}:=V_nS_n^\top U_n^\top U_{n+1}
+hF_{n,2}^\top U_{n+1}\).
\State Compute \(V_{n+1}S_{n+1}^\top=L_{n+1}\), with
\(V_{n+1}^\top V_{n+1}=I_r\).
\Ensure \(Y_{n+1}:=U_{n+1}S_{n+1}V_{n+1}^\top\)
\end{algorithmic}
\end{algorithm}
\FloatBarrier

Under the standing assumptions and \eqref{eq:assumption-smoothness}
with \(p=2\), Theorem~\ref{thm:rk-stage-criterion} applies to the midpoint
method for all sufficiently small \(h\) and every rank-admissible
computation.

\begin{proof}
The tableau \eqref{eq:midpoint-tableau} gives
\begin{equation}\label{eq:midpoint-moments}
 \sum_{i=1}^{2}b_i=1,\qquad
 \sum_{i=1}^{2}b_ic_i=\frac12,\qquad b_1=0,\quad b_2=1.
\end{equation}
Lemma~\ref{lem:flow-derivatives} gives the derivative bound in
Theorem~\ref{thm:rk-stage-criterion}.  To verify the stage bound, use
\(F_{n,1}=g_n(0)\) and Taylor's formula:
\begin{equation}\label{eq:midpoint-euler-error}
 \left\|Y_n+\Delta A_{n,2}-A_n\left(\frac h2\right)\right\|
 =\left\|Y_n+\frac h2g_n(0)-A_n\left(\frac h2\right)\right\|
 \le\frac{M_1}{8}h^2.
\end{equation}
Choose \(h_0\le\min(1,h_*)\).  Theorem~\ref{thm:accuracy-transfer},
applied at elapsed time \(h/2\), yields
\begin{align}\label{eq:midpoint-stage-error}
 \left\|Y_{n,2}-A_n\left(\frac h2\right)\right\|
 &\le4\left\|Y_n+\Delta A_{n,2}-A_n\left(\frac h2\right)\right\|
    +\frac{5\sqrt{mn}}2\eps_rh e^{L_Fh/2}\notag\\
 &\le\frac{M_1}{2}h^2+\frac{5\sqrt{mn}}2\eps_rh e^{L_Fh/2}
 \le C(h^2+h\eps_r).
\end{align}
This is the required estimate for the only stage with nonzero final
weight.  All bounds are uniform in \(n\) and the singular values of the
numerical approximations, so Theorem~\ref{thm:rk-stage-criterion} gives the local and global
conclusions with \(p=2\).
\end{proof}

\begin{remark}
The same verification applies to every fixed two-stage explicit
Runge--Kutta method
\[
 \begin{array}{c|cc}
  0&0&0\\
  a&a&0\\ \hline
   &b_1&b_2
 \end{array},
 \qquad 0<a\le1,\qquad b_1+b_2=1,\qquad ab_2=\frac12.
\]
Use
\[
 \Delta A_{n,2}:=ahF_{n,1},\qquad
 F_{n,2}:=F(t_n+ah,Y_{n,2}),\qquad
 \Delta A_n:=h(b_1F_{n,1}+b_2F_{n,2})
\]
in Algorithm~\ref{alg:common-base-rk}.  The first stage is exact,
\(Y_{n,1}=A_n(0)\), and the calculation above gives
\(\|Y_{n,2}-A_n(ah)\|\le C(h^2+h\eps_r)\).
The moment conditions follow from \(b_1+b_2=1\) and \(ab_2=1/2\).
Theorem~\ref{thm:rk-stage-criterion} therefore applies with \(p=2\),
with constants that may depend on the fixed coefficients.
\end{remark}

\subsection{Heun's third-order method}
\label{sec:rk3}

We use the three-stage third-order Runge--Kutta method (RK3)
\begin{equation}\label{eq:heun-tableau}
\begin{array}{c|ccc}
0&0&0&0\\
\frac13&\frac13&0&0\\
\frac23&0&\frac23&0\\ \hline
&\frac14&0&\frac34
\end{array}.
\end{equation}

Starting from \eqref{eq:method-start}, perform the following calculations.
\begin{enumerate}
\item Set \(F_{n,1}:=F(t_n,Y_n)\).  Apply the reduced \(K\)-\(L\) step to
\((U_n,S_n,V_n)\) with
\begin{equation}\label{eq:rk3-stage-two-increment}
 \Delta A_{n,2}:=\frac h3F_{n,1}.
\end{equation}
Denote the stage approximation and the corresponding value of \(F\) by
\begin{equation}\label{eq:rk3-stage-two}
 Y_{n,2}:=U_{n,2}S_{n,2}V_{n,2}^\top,
 \qquad F_{n,2}:=F\left(t_n+\frac h3,Y_{n,2}\right).
\end{equation}

\item Restart from \((U_n,S_n,V_n)\) and apply the same calculation
with
\begin{equation}\label{eq:rk3-stage-three-increment}
 \Delta A_{n,3}:=\frac{2h}{3}F_{n,2}.
\end{equation}
Denote the stage approximation and the corresponding value of \(F\) by
\begin{equation}\label{eq:rk3-stage-three}
 Y_{n,3}:=U_{n,3}S_{n,3}V_{n,3}^\top,
 \qquad F_{n,3}:=F\left(t_n+\frac{2h}{3},Y_{n,3}\right).
\end{equation}

\item Restart once more from \((U_n,S_n,V_n)\), now with
\begin{equation}\label{eq:rk3-final-increment}
 \Delta A_n:=h\left(\frac14F_{n,1}+\frac34F_{n,3}\right).
\end{equation}
The output is \(Y_{n+1}:=U_{n+1}S_{n+1}V_{n+1}^\top\).
\end{enumerate}

Under the standing assumptions and \eqref{eq:assumption-smoothness}
with \(p=3\), Theorem~\ref{thm:rk-stage-criterion} applies to this method
for all sufficiently small \(h\) and every rank-admissible computation.

\begin{proof}
The tableau \eqref{eq:heun-tableau} gives
\begin{equation}\label{eq:heun-moments}
 \sum_{i=1}^{3}b_i=1,\qquad
 \sum_{i=1}^{3}b_ic_i=\frac34\frac23=\frac12,\qquad
 \sum_{i=1}^{3}b_ic_i^2=\frac34\left(\frac23\right)^2=\frac13.
\end{equation}
Since \(b_2=0\) and \(Y_{n,1}=A_n(0)\), the remaining stage condition is
\[
 \left\|Y_{n,3}-A_n\left(\frac{2h}{3}\right)\right\|
 \le C(h^3+h\eps_r).
\]
We obtain it from the second stage.  Taylor's formula gives
\begin{equation}\label{eq:rk3-first-stage-consistency}
 \left\|Y_n+\Delta A_{n,2}-A_n\left(\frac h3\right)\right\|
 =\left\|Y_n+\frac h3g_n(0)-A_n\left(\frac h3\right)\right\|
 \le\frac{M_1}{18}h^2.
\end{equation}
For \(h_0\le\min(1,h_*)\), Theorem~\ref{thm:accuracy-transfer} at elapsed
time \(h/3\) yields
\begin{align}\label{eq:rk3-second-stage-error}
 \left\|Y_{n,2}-A_n\left(\frac h3\right)\right\|
 &\le4\left\|Y_n+\Delta A_{n,2}-A_n\left(\frac h3\right)\right\|
       +\frac{5\sqrt{mn}}3\eps_rh e^{L_Fh/3}\notag\\
 &\le\frac{2M_1}{9}h^2+\frac{5\sqrt{mn}}3\eps_rh e^{L_Fh/3}
 \le C(h^2+h\eps_r).
\end{align}
Next, \eqref{eq:rk3-stage-three-increment}, the Lipschitz bound, and
midpoint quadrature on \([0,2h/3]\) give
\begin{align}
 &\left\|Y_n+\Delta A_{n,3}-A_n\left(\frac{2h}{3}\right)\right\|\notag\\
 &\quad=\left\|Y_n+\frac{2h}{3}F\left(t_n+\frac h3,Y_{n,2}\right)
                  -A_n\left(\frac{2h}{3}\right)\right\|\notag\\
 &\quad\le\frac{2h}{3}L_F
            \left\|Y_{n,2}-A_n\left(\frac h3\right)\right\|
       +\left\|\frac{2h}{3}g_n\left(\frac h3\right)
                    -\int_0^{2h/3}g_n(s)\,ds\right\|\notag\\
 &\quad\le C(h^3+h^2\eps_r)+\frac{M_2}{81}h^3
       \le C(h^3+h^2\eps_r).
 \label{eq:rk3-third-target-error}
\end{align}
Theorem~\ref{thm:accuracy-transfer}, now at elapsed time \(2h/3\), gives
\begin{align}\label{eq:rk3-third-stage-error}
 \left\|Y_{n,3}-A_n\left(\frac{2h}{3}\right)\right\|
 &\le4\left\|Y_n+\Delta A_{n,3}
                -A_n\left(\frac{2h}{3}\right)\right\|
       +\frac{10\sqrt{mn}}3\eps_rh e^{2L_Fh/3}\notag\\
 &\le C(h^3+h\eps_r).
\end{align}
Together with Lemma~\ref{lem:flow-derivatives} and
\eqref{eq:heun-moments}, this verifies every hypothesis of
Theorem~\ref{thm:rk-stage-criterion}, uniformly over the steps.
Its local and global conclusions follow with \(p=3\).
\end{proof}

\begin{remark}
\label{rem:higher-orders}
The classical four-stage fourth-order Runge--Kutta method (RK4) assigns
nonzero final weights to its second and third
stages, whose available bounds are \(O(h^2+h\eps_r)\).  Higher robust
orders can be obtained by adding stages that improve the internal
approximations before they enter the final weighted sum.  For example,
a five-stage construction can use stages two and three as predictors
with \(b_2=b_3=0\), followed by stages at \(c_4=1/2\) and \(c_5=1\)
with errors \(O(h^4+h\eps_r)\).  The final weights
\((b_1,b_4,b_5)=(1/6,2/3,1/6)\) give Simpson quadrature, so
Theorem~\ref{thm:rk-stage-criterion} yields robust order four under uniform
\(C^4\) bounds on \(g_n\) and rank-admissible computations.
\end{remark}

\section{Numerical experiments}
\label{sec:numerical}

We compare the projector-splitting Euler method with the midpoint and
Heun third-order methods of Section~\ref{sec:rk} on a heat equation,
an Allen--Cahn equation, and a linear matrix equation with small singular
values.  For each rank, the numerical and reference solutions start from
the same rank-\(r\) matrix.  We report the relative Frobenius error
\(\|Y_N-A(T)\|/\|A(T)\|\), where \(T=t_0+Nh\).

\subsection{Heat equation}
\label{sec:numerical-heat}

We consider the spatially discretized heat equation from
\cite{CerutiEinkemmerKuschLubich2024},
\begin{equation}\label{eq:numerical-heat}
 \dot A=DA+AD+G,
 \qquad
 G_{ij}:=\sum_{k=1}^{11}10^{1-k}e^{-k(x_i^2+x_j^2)}.
\end{equation}
We use \(m=n=32\) interior grid points on \((-\pi,\pi)\), with
\begin{equation}\label{eq:numerical-heat-grid}
 \Delta x:=\frac{2\pi}{33},\qquad
 x_i:=-\pi+i\Delta x,\qquad
 D:=\frac{1}{\Delta x^2}\operatorname{tridiag}(1,-2,1).
\end{equation}
We impose homogeneous Dirichlet boundary conditions.  Starting with the
entries \(\sin x_i\sin x_j\), we evolve the full matrix equation for
time \(0.1\).  The best rank-\(r\) approximation of this value supplies
the common initial value \(Y_0=A(t_0)\) at \(t_0=0.1\).
We then integrate to \(T=1\), with \(r=4,6,8\) and
\(N=128,256,\ldots,4096\).  Reference values are evaluated by
diagonalizing \(D\).

Figure~\ref{fig:heat-equation}(a) shows convergence at \(r=8\).
The fitted slopes are \(1.00\), \(2.01\), and \(3.01\) for Euler,
midpoint, and Heun RK3, respectively.  Panel (b) shows the effect of rank
on the Heun RK3 error.  At \(r=4\) and \(r=6\), the errors approach
\(2.19\cdot10^{-5}\) and \(2.34\cdot10^{-8}\), respectively, while
the error at \(r=8\) reaches \(2.28\cdot10^{-11}\).
The corresponding best rank-\(r\) approximation errors of the reference
solution at \(T\) are \(2.12\cdot10^{-5}\),
\(2.29\cdot10^{-8}\), and \(2.23\cdot10^{-11}\).

\begin{figure}[htbp]
 \centering
 \includegraphics[width=\textwidth]{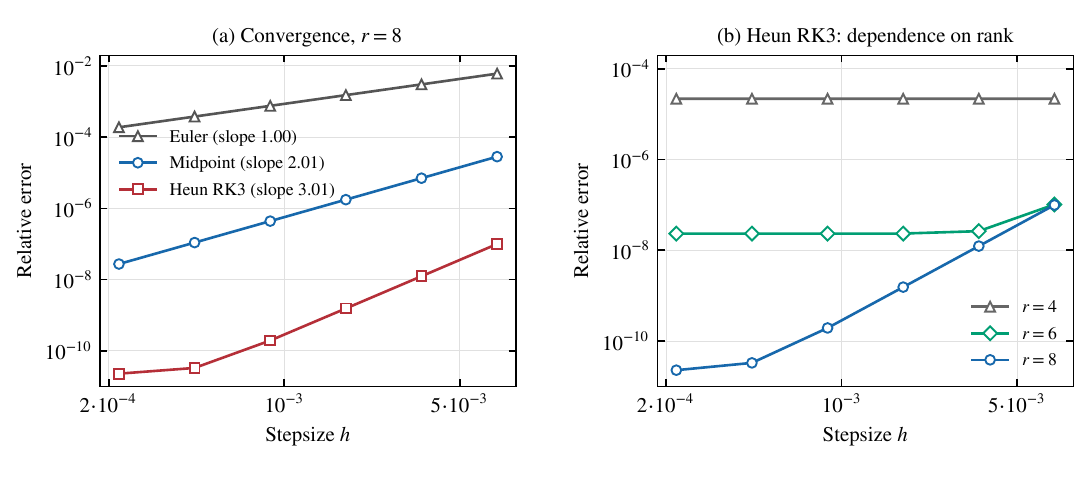}
 \caption{Relative errors for the heat equation at \(T=1\).
 (a) Euler, midpoint, and Heun RK3 at \(r=8\); slopes are fitted over
 the three largest stepsizes.  (b) Heun RK3 at \(r=4,6,8\).}
 \label{fig:heat-equation}
\end{figure}

\subsection{Allen--Cahn equation}
\label{sec:numerical-allen-cahn}

We next use the matrix Allen--Cahn equation from
\cite{NobileRiffaud2026},
\begin{equation}\label{eq:numerical-allen-cahn}
 \dot A=0.01(DA+AD)+A-A^{\circ3},
\end{equation}
where \(A^{\circ3}\) denotes the entrywise cube.  Here \(m=n=64\),
\(x_i=(i-1)\Delta x\), \(\Delta x=2\pi/63\), and
\(D=(\Delta x)^{-2}\operatorname{tridiag}(1,-2,1)\).
We use the initial function
\begin{equation}\label{eq:numerical-allen-initial}
 u_0(x,y):=
 \frac{(e^{-\tan^2x}+e^{-\tan^2y})\sin x\sin y}
 {1+e^{|\csc(x/2)|}+e^{|\csc(y/2)|}},
 \qquad 0\le x,y\le2\pi,
\end{equation}
with values at singular arguments defined by continuity.
For \(r=4,8,12,16\), the common initial value \(Y_0=A(0)\) is the
best rank-\(r\) approximation of \((u_0(x_i,x_j))_{i,j}\).
We integrate to \(T=10\) with \(N=80,160,\ldots,2560\).
For Heun RK3 at \(r=8,16\), we also include \(N=5120,10240\).

At \(r=16\), the midpoint and Heun RK3 methods give slopes \(1.99\)
and \(3.00\) over the three largest stepsizes; see
Figure~\ref{fig:allen-cahn}(a).  At smaller stepsizes, the Heun RK3 error
is nonmonotone and remains on the scale of \(10^{-9}\).
Panel (b) shows the dependence on rank, including the nonmonotone
small-step errors at \(r=8\).  At the finest stepsizes shown, the
Heun RK3 errors for \(r=4,8,12,16\) are
\(7.77\cdot10^{-4}\), \(1.78\cdot10^{-5}\),
\(2.15\cdot10^{-7}\), and \(3.92\cdot10^{-9}\), respectively.

\begin{figure}[htbp]
 \centering
 \includegraphics[width=\textwidth]{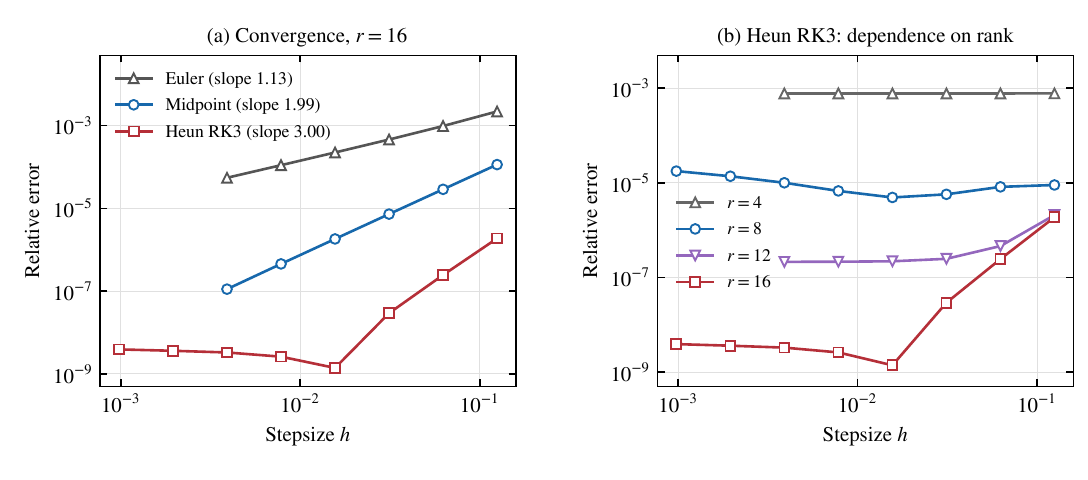}
 \caption{Relative errors for the Allen--Cahn equation at \(T=10\).
 (a) Euler, midpoint, and Heun RK3 at \(r=16\); slopes are fitted over
 the three largest stepsizes.  (b) Heun RK3 at \(r=4,8,12,16\), with
 the additional refinements at \(r=8,16\).}
 \label{fig:allen-cahn}
\end{figure}

\subsection{Small singular values}
\label{sec:numerical-sigma}

Finally, we consider the linear matrix equation
\begin{equation}\label{eq:numerical-linear}
 \dot A=\Omega(t)A+A\Gamma(t),\qquad
 \Omega(t):=\Omega_0+\sin(2t)\Omega_1,\qquad
 \Gamma(t):=\Gamma_0+\cos(3t)\Gamma_1,
\end{equation}
with \(m=n=20\) and \(r=4\).  The four generators are fixed
skew-symmetric matrices with operator norms \(0.65,0.45,0.60,0.40\),
respectively.  This equation preserves singular values.

For fixed orthonormal matrices \(U_0,V_0\), set
\begin{equation}\label{eq:numerical-sigma-initial}
 Y_0=A(0)=U_0\operatorname{diag}(1,10^{-1},10^{-2},\sigma)V_0^\top,
 \qquad \sigma\in\{10^{-2},10^{-6},10^{-10},10^{-14}\}.
\end{equation}
We integrate to \(T=1\) with \(h=2^{-3},\ldots,2^{-7}\).
Figure~\ref{fig:small-sigma}(a) plots the largest relative error over the
four values of \(\sigma\), giving slopes \(1.00\), \(2.00\), and
\(3.00\).  Panel (b) fixes \(h=2^{-7}\) and divides the relative error
by \(h^p\), with \(p=2\) for midpoint and \(p=3\) for Heun RK3.
As \(\sigma\) decreases from \(10^{-2}\) to \(10^{-14}\), these scaled
errors vary by factors of approximately \(1.24\) and \(1.23\),
respectively.

\begin{figure}[htbp]
 \centering
 \includegraphics[width=\textwidth]{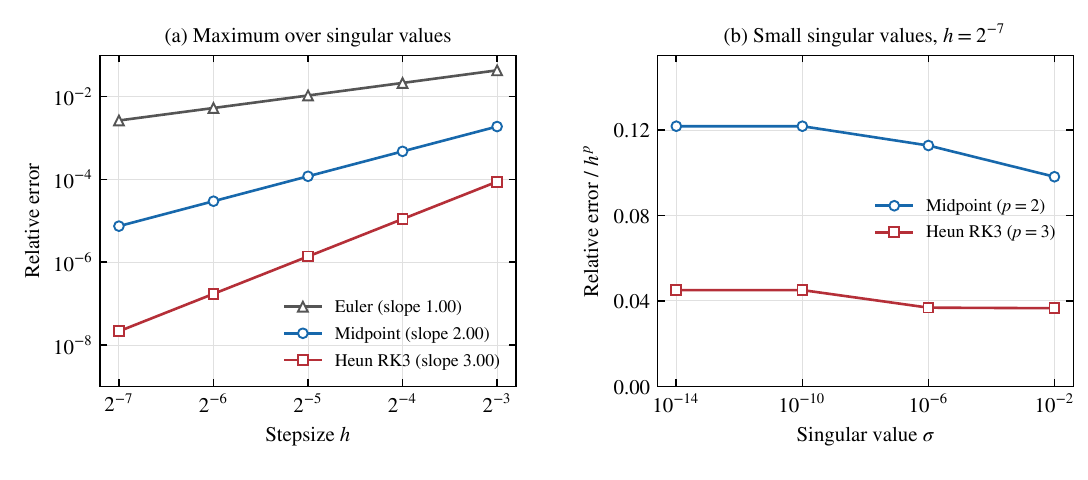}
 \caption{Relative errors for the linear matrix equation at \(T=1\).
 (a) Maximum over \(\sigma=10^{-2},10^{-6},10^{-10},10^{-14}\);
 slopes are fitted over the three smallest stepsizes.
 (b) Relative errors divided by \(h^p\) at \(h=2^{-7}\).}
 \label{fig:small-sigma}
\end{figure}
\FloatBarrier

\appendix
\section{Appendix: The rank-preserving comparison curve}
\label{app:comparison-curve}

Under the standing assumptions, we prove
Lemma~\ref{lem:comparison-curve} by first extending the
normal-defect bound from \(\Mr\) to its lower-rank strata, and then constructing
left and right generators whose norms do not contain inverse singular
values.
For this proof, write
\begin{equation}\label{eq:lower-rank-set}
 \Mle:=\{Y\in\R^{m\times n}:\rank(Y)\le r\}.
\end{equation}

\subsection{The normal defect on lower-rank strata}
\label{app:lower-strata}

\begin{lemma}
\label{lem:lower-stratum-defect}
Let \(X\in\Mle\).  If \(x\) and \(y\) are unit vectors satisfying
\begin{equation}\label{eq:lower-normal-directions}
 x\perp\range(X),
 \qquad
 y\perp\range(X^\top),
\end{equation}
then
\begin{equation}\label{eq:lower-stratum-defect}
 |x^\top F(t,X)y|\le\eps_r.
\end{equation}
\end{lemma}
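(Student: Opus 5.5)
The plan is to approximate \(X\) by rank-\(r\) matrices \(Y_k\in\Mr\) whose range and corange stay orthogonal to \(x\) and \(y\). We then apply the normal-component assumption \eqref{eq:assumption-normal} at each \(Y_k\) and pass to the limit using the continuity of \(F\) in its matrix argument, which follows from the Lipschitz bound in \eqref{eq:assumption-BL}.

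\emph{Step 1: a rank-\(r\) perturbation.} Let \(k:=\rank X\le r\). If \(k=r\), take \(Y_k=X\) and skip the limit. Otherwise write a compact SVD \(X=U_XS_XV_X^\top\) with \(U_X\in\R^{m\times k}\) and \(V_X\in\R^{n\times k}\). Since \(r<\min(m,n)\), we have \(k+1\le r<m\), so the orthogonal complement of \(\range(U_X)+\operatorname{span}\{x\}\) has dimension at least \(m-k-1\ge r-k\). We can therefore choose orthonormal vectors \(u_{k+1},\dots,u_r\) orthogonal to both \(\range(X)\) and \(x\). Likewise we choose orthonormal \(v_{k+1},\dots,v_r\) orthogonal to \(\range(X^\top)\) and \(y\). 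Then set
\[
 Y_\eta:=X+\eta\sum_{j=k+1}^r u_jv_j^\top,\qquad \eta>0.
\]
This matrix has rank exactly \(r\). Its column space is \(\range(U_X)\oplus\operatorname{span}\{u_j\}\), which is orthogonal to \(x\), and its row space is orthogonal to \(y\). Moreover \(\|Y_\eta-X\|=\eta\sqrt{r-k}\).

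\emph{Step 2: apply the assumption.} Write \(Y_\eta=USV^\top\) with \(U,V\) orthonormal bases of the column and row spaces. Since \(x\perp\range(U)\), we have \((I-UU^\top)x=x\), and similarly \((I-VV^\top)y=y\). Hence
\[
 x^\top F(t,Y_\eta)y=x^\top(I-UU^\top)F(t,Y_\eta)(I-VV^\top)y .
\]
Its absolute value is at most \(\|x\|\,\|(I-UU^\top)F(t,Y_\eta)(I-VV^\top)\|_2\,\|y\|\le\eps_r\), because the operator norm is bounded by the Frobenius norm.

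\emph{Step 3: limit.} The Lipschitz bound gives
\[
 |x^\top F(t,X)y-x^\top F(t,Y_\eta)y|\le L_F\eta\sqrt{r-k}.
\]
Letting \(\eta\to0\) yields \eqref{eq:lower-stratum-defect}.

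The only delicate point is the dimension count in Step 1. We need enough room to add the \(r-k\) extra directions while keeping them orthogonal to \(x\) and \(y\). This is exactly where the hypothesis \(r<\min(m,n)\) from the standing assumptions is used: it gives \(m-k-1\ge r-k\), and similarly \(n-k-1\ge r-k\).
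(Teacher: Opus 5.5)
Your proposal is correct and follows essentially the same route as the paper: the same rank completion \(X+\eta\sum_j u_jv_j^\top\), with new directions orthogonal to both \(\range(X)\) (resp.\ \(\range(X^\top)\)) and \(x\) (resp.\ \(y\)), the same dimension count \(m-k-1\ge r-k\) from \(r<\min(m,n)\), and the same limit \(\eta\to0\). The only difference is that you make the passage to the limit explicit through the Lipschitz bound \(L_F\eta\sqrt{r-k}\), where the paper leaves the continuity step implicit.
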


\begin{proof}
For \(\rank(X)=r\), equation \eqref{eq:lower-stratum-defect} follows directly
from \eqref{eq:assumption-normal}.  Let \(q:=\rank(X)<r\) and
\(d:=r-q\).  Since \(r<m,n\),
\begin{align}
 \dim\bigl(\range(X)^\perp\cap x^\perp\bigr)
 &=m-q-1\ge d,                                      \label{eq:left-completion-dimension}\\
 \dim\bigl(\range(X^\top)^\perp\cap y^\perp\bigr)
 &=n-q-1\ge d.                                      \label{eq:right-completion-dimension}
\end{align}
Choose orthonormal vectors \(\widehat u_1,\ldots,\widehat u_d\) in the
first space and \(\widehat v_1,\ldots,\widehat v_d\) in the second, and
define, for \(\tau>0\),
\begin{equation}\label{eq:rank-completion}
 X_\tau:=X+\tau\sum_{\ell=1}^d\widehat u_\ell\widehat v_\ell^\top.
\end{equation}
The new left and right directions are orthogonal to the column and row
spaces of \(X\).  Hence
\begin{equation}\label{eq:rank-completion-properties}
 \rank(X_\tau)=r,
 \qquad
 x\perp\range(X_\tau),
 \qquad
 y\perp\range(X_\tau^\top).
\end{equation}
Equation \eqref{eq:assumption-normal} gives
\(|x^\top F(t,X_\tau)y|\le\eps_r\).  Letting \(\tau\to0\) proves
\eqref{eq:lower-stratum-defect}.
\end{proof}

\subsection{Bounded left and right generators}
\label{app:bounded-generators}

\begin{lemma}
\label{lem:bounded-generators}
For every \(t\in[0,T]\) and \(X\in\Mle\), there are matrices
\(G_L(t,X)\in\R^{m\times m}\), \(G_R(t,X)\in\R^{n\times n}\), and
\(E(t,X)\in\R^{m\times n}\) such that
\begin{equation}\label{eq:generator-decomposition}
 F(t,X)=G_L(t,X)X+XG_R(t,X)+E(t,X),
\end{equation}
with
\begin{equation}\label{eq:generator-bounds}
 \|G_L(t,X)\|_2,\ \|G_R(t,X)\|_2
 \le K_G:=(r+\sqrt r)L_F,
 \qquad
 \|E(t,X)\|\le E_G:=\sqrt{mn}\,\eps_r.
\end{equation}
\end{lemma}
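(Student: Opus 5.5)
The plan is to build the decomposition \eqref{eq:generator-decomposition} from a thin SVD \(X=\sum_{k=1}^q\sigma_ku_kv_k^\top\), \(q=\rank X\le r\), with \(U=[u_1,\dots,u_q]\) and \(V=[v_1,\dots,v_q]\). I will split \(F:=F(t,X)\) into four blocks:
\[
 F=UU^\top FVV^\top+(I-UU^\top)FVV^\top+UU^\top F(I-VV^\top)+(I-UU^\top)F(I-VV^\top).
\]
The last block goes into \(E\). Expand it in orthonormal bases \(\{x_i\}\) of \(\range(X)^\perp\) and \(\{y_j\}\) of \(\range(X^\top)^\perp\). Lemma~\ref{lem:lower-stratum-defect} bounds every coefficient \(x_i^\top Fy_j\) by \(\eps_r\), so this block has Frobenius norm at most \(\sqrt{(m-q)(n-q)}\,\eps_r\). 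The three tangential blocks must be written as \(G_LX\) or \(XG_R\) plus further small normal-type remainders. The whole budget for all remainders is \(E_G=\sqrt{mn}\,\eps_r\).

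The device that avoids \(\sigma_k^{-1}\) in the norms is to delete one singular component at a time. Set \(X^{(k)}:=X-\sigma_ku_kv_k^\top\in\Mle\). Then \(u_k\perp\range(X^{(k)})\) and \(v_k\perp\range(X^{(k)\top})\). The Lipschitz bound \eqref{eq:assumption-BL} gives \(\|F(t,X)-F(t,X^{(k)})\|\le L_F\sigma_k\).

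For the right-tangential block \(UU^\top F(I-VV^\top)=\sum_ku_ku_k^\top F(I-VV^\top)\), I write
\[
 u_k^\top F(t,X)(I-VV^\top)=u_k^\top\bigl(F(t,X)-F(t,X^{(k)})\bigr)(I-VV^\top)+u_k^\top F(t,X^{(k)})(I-VV^\top).
\]
The second row vector is tested by \(u_k\) on the left and by vectors \(y\perp\range(X^\top)\) on the right, and such \(y\) are also orthogonal to \(\range(X^{(k)\top})\). Lemma~\ref{lem:lower-stratum-defect} applied to \(X^{(k)}\) therefore bounds its entries by \(\eps_r\), and it goes into \(E\). The first row vector has norm at most \(L_F\sigma_k\). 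It is absorbed by putting
\[
 G_R\mathrel{+}=\sum_k\sigma_k^{-1}v_k\,u_k^\top\bigl(F(t,X)-F(t,X^{(k)})\bigr)(I-VV^\top),
\]
since \(X\sigma_k^{-1}v_k=u_k\). This piece of \(G_R\) has operator norm at most \(\sum_k L_F\le rL_F\), because each summand is rank one with norm at most \(L_F\). The left-tangential block \((I-UU^\top)FVV^\top\) is handled the same way with the roles of rows and columns swapped, and contributes a term of norm at most \(rL_F\) to \(G_L\).

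The main obstacle is the core block \(UU^\top FVV^\top\), whose \(q\times q\) coefficients \(u_j^\top Fv_k\) are not normal with respect to any single \(X^{(k)}\). My plan is to place it as \(G_LX\) with
\[
 G_L\mathrel{+}=\sum_k\sigma_k^{-1}UU^\top\bigl(F(t,X)-F(t,X^{(k)})\bigr)v_ku_k^\top,
\]
which reproduces \(UU^\top\bigl(F-F(t,X^{(k)})\bigr)v_kv_k^\top\). Its norm is bounded as a sum of \(r\) column vectors of length at most \(L_F\). If these are combined orthogonally, I expect a bound of \(\sqrt r\,L_F\) in Frobenius norm, which is where the \(\sqrt r\) in \(K_G\) would come from.

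That leaves the residual \(\sum_kUU^\top F(t,X^{(k)})v_kv_k^\top\). Its \(u_k\)-component, \(u_k^\top F(t,X^{(k)})v_k\), is normal for \(X^{(k)}\) and so is bounded by \(\eps_r\); it goes into \(E\). The off-diagonal components \(u_j^\top F(t,X^{(k)})v_k\) with \(j\ne k\) are not normal for \(X^{(k)}\). For these I will try a nested deletion \(X_k:=\sum_{l>k}\sigma_lu_lv_l^\top\) with telescoping differences, ordered so that in each triangular part one of the two test vectors is orthogonal to the reduced matrix. Each such entry is then either absorbed into \(G_L\) or \(G_R\) with a factor \(L_F\), or bounded by \(\eps_r\) via Lemma~\ref{lem:lower-stratum-defect}.

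Finally I will check the accumulated \(E\). It consists of entries, each at most \(\eps_r\), indexed by distinct pairs of orthonormal test vectors, with at most \(mn\) such pairs overall, so \(\|E\|\le\sqrt{mn}\,\eps_r\). I expect the bookkeeping that makes the core-block generators meet the exact constant \((r+\sqrt r)L_F\) to be the delicate step.
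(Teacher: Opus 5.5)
Your treatment of the two mixed blocks by single deletions $X^{(k)}$ is exactly the paper's columns $g_i^L,g_i^R$. Your entrywise accounting of $E$ in completed orthonormal bases also matches the paper. The gap is the step you flag yourself: the off-diagonal core entries.

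After you put the whole core block into $G_L$, the residual $u_j^\top F(t,X^{(k)})v_k$ with $j\ne k$ is not normal for $X^{(k)}$, and the nested-deletion plan does not repair it. Lemma~\ref{lem:lower-stratum-defect} requires \emph{both} test vectors to be normal. So the reference matrix for entry $(j,k)$ must have both the $j$th and the $k$th singular components removed; having one of the two test vectors orthogonal is not enough. Moreover, the $(j,k)$ entry of $U^\top(G_LX+XG_R)V$ is $(U^\top G_LU)_{jk}\sigma_k+\sigma_j(V^\top G_RV)_{jk}$. A Lipschitz difference can therefore be absorbed with an $O(L_F)$ coefficient only if it is $O(\max(\sigma_j,\sigma_k))$. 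Deleting further components $\sigma_l$ with $l\notin\{j,k\}$, as $\sum_{l>k}\sigma_lu_lv_l^\top$ and its telescoping differences do, produces terms of size $L_F\sigma_l$. With decreasing ordering these need coefficients of size $\sigma_1/\max(\sigma_j,\sigma_k)$, which are not bounded independently of the singular values. With increasing ordering you get factors $\sqrt{k}$ per entry, and the resulting bound grows like $r^{3/2}L_F$, not $K_G$.

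The missing idea is to delete exactly the two components $j,k$ and split the resulting difference between $G_L$ and $G_R$. The paper does this symmetrically. With $X_{ij}$ the two-component deletion and $X_i$ your $X^{(i)}$, it sets $\Gamma_{ij}:=u_i^\top(F(t,X)-F(t,X_{ij}))v_j/(\sigma_i+\sigma_j)$ and $\Gamma_{ii}:=u_i^\top(F(t,X)-F(t,X_i))v_i/(2\sigma_i)$. It then places $U\Gamma U^\top$ in $G_L$ and $V\Gamma V^\top$ in $G_R$, so $U^\top(G_LX+XG_R)V=\Gamma\Sigma+\Sigma\Gamma$. This leaves exactly the normal residuals $u_i^\top F(t,X_{ij})v_j$ and $u_i^\top F(t,X_i)v_i$. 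The bound $\|X-X_{ij}\|\le\sigma_i+\sigma_j$ gives $|\Gamma_{ij}|\le L_F$, so $\|\Gamma\|\le qL_F$, while the mixed columns give $\sqrt q\,L_F$. This, not the core block, is the origin of $r+\sqrt r$.

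Your asymmetric scheme can also be completed directly. Write $u_j^\top F(t,X^{(k)})v_k=u_j^\top(F(t,X^{(k)})-F(t,X_{jk}))v_k+u_j^\top F(t,X_{jk})v_k$. Since $X^{(k)}-X_{jk}=\sigma_ju_jv_j^\top$, the first term is at most $L_F\sigma_j$. It enters $G_R$ as $b_{jk}v_jv_k^\top$ with $|b_{jk}|\le L_F$, and the second term is normal for $X_{jk}$. To stay within $K_G$, combine orthogonal pieces instead of summing rank-one norms. Then $G_L=\sum_k\sigma_k^{-1}(F(t,X)-F(t,X^{(k)}))v_ku_k^\top$ satisfies $\|G_L\|_2\le\sqrt q\,L_F$. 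Also $G_R=VC$ with $\|C\|^2\le qL_F^2+q(q-1)L_F^2$, so $\|G_R\|_2\le qL_F$. By contrast, your triangle-inequality bound $rL_F$ for the mixed block plus the correction exceeds $K_G$ once $r\ge3$.
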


\begin{proof}
Let \(q:=\rank(X)\).  If \(q=0\), then \(X=0\).  Lemma
\ref{lem:lower-stratum-defect}, applied to arbitrary unit vectors \(x,y\),
shows that every entry of \(F(t,0)\) in orthonormal coordinates is bounded
by \(\eps_r\).  Thus
\begin{equation}\label{eq:zero-generator-case}
 G_L(t,0):=0,\qquad G_R(t,0):=0,\qquad E(t,0):=F(t,0)
\end{equation}
satisfies \eqref{eq:generator-decomposition}--\eqref{eq:generator-bounds}.

Suppose \(q\ge1\), and take a compact singular value decomposition (SVD)
\begin{equation}\label{eq:generator-svd}
 X=U\Sigma V^\top
  =\sum_{i=1}^q\sigma_i u_i v_i^\top,
 \qquad \sigma_i>0.
\end{equation}
Define
\begin{align}
 X_i&:=X-\sigma_i u_i v_i^\top,                       \label{eq:delete-one}\\
 X_{ij}&:=X-\sigma_i u_i v_i^\top-\sigma_j u_j v_j^\top
 \quad(i\ne j).                                      \label{eq:delete-two}
\end{align}
Define
\begin{align}
 g_i^L&:=\frac{(I-UU^\top)(F(t,X)-F(t,X_i))v_i}{\sigma_i},
                                                               \label{eq:left-generator-column}\\
 g_i^R&:=\frac{(I-VV^\top)(F(t,X)-F(t,X_i))^\top u_i}{\sigma_i}.
                                                               \label{eq:right-generator-column}
\end{align}
The Lipschitz bound in \eqref{eq:assumption-BL} gives
\begin{equation}\label{eq:normal-generator-bounds}
 \|g_i^L\|_2,\ \|g_i^R\|_2\le L_F.
\end{equation}
With
\(G_\perp^L:=[g_1^L,\ldots,g_q^L]\) and
\(G_\perp^R:=[g_1^R,\ldots,g_q^R]\), it follows that
\begin{equation}\label{eq:normal-generator-matrix-bounds}
 \|G_\perp^L\|_2,\ \|G_\perp^R\|_2\le\sqrt q\,L_F.
\end{equation}

Define \(\Gamma\in\R^{q\times q}\) by
\begin{align}
 \Gamma_{ii}
 &:=\frac{u_i^\top(F(t,X)-F(t,X_i))v_i}{2\sigma_i},
                                                               \label{eq:gamma-diagonal}\\
 \Gamma_{ij}
 &:=\frac{u_i^\top(F(t,X)-F(t,X_{ij}))v_j}
          {\sigma_i+\sigma_j}
 \quad(i\ne j).                                      \label{eq:gamma-offdiagonal}
\end{align}
Since
\(\|X-X_{ij}\|=(\sigma_i^2+\sigma_j^2)^{1/2}
\le\sigma_i+\sigma_j\),
\begin{equation}\label{eq:gamma-bound}
 |\Gamma_{ii}|\le\frac{L_F}{2},\qquad
 |\Gamma_{ij}|\le L_F,\qquad
 \|\Gamma\|_2\le\|\Gamma\|\le qL_F.
\end{equation}
Now set
\begin{equation}\label{eq:generators-defined}
 G_L:=G_\perp^LU^\top+U\Gamma U^\top,
 \qquad
 G_R:=V(G_\perp^R)^\top+V\Gamma V^\top.
\end{equation}
Equations \eqref{eq:normal-generator-matrix-bounds} and
\eqref{eq:gamma-bound} yield
\begin{equation}\label{eq:generators-norm}
 \|G_L\|_2,\ \|G_R\|_2
 \le(\sqrt q+q)L_F\le K_G.
\end{equation}

It remains to bound
\begin{equation}\label{eq:generator-residual-defined}
 E:=F(t,X)-G_LX-XG_R.
\end{equation}
Complete \(U,V\) to orthogonal matrices \([U,U_\perp]\) and
\([V,V_\perp]\).  Direct substitution of
\eqref{eq:left-generator-column}--\eqref{eq:generators-defined} gives
\begin{align}
 (I-UU^\top)EVV^\top
 &=\sum_{i=1}^q (I-UU^\top)F(t,X_i)v_i v_i^\top,       \label{eq:residual-left-normal}\\
 UU^\top E(I-VV^\top)
 &=\sum_{i=1}^q u_i u_i^\top F(t,X_i)(I-VV^\top),      \label{eq:residual-right-normal}\\
 (U^\top EV)_{ii}
 &=u_i^\top F(t,X_i)v_i,                              \label{eq:residual-core-diagonal}\\
 (U^\top EV)_{ij}
 &=u_i^\top F(t,X_{ij})v_j\quad(i\ne j),              \label{eq:residual-core-offdiagonal}\\
 (I-UU^\top)E(I-VV^\top)
 &=(I-UU^\top)F(t,X)(I-VV^\top).                       \label{eq:residual-normal-normal}
\end{align}
For every scalar entry on the right-hand sides, the left and right vectors
are normal directions to \(X_i\), \(X_{ij}\), or \(X\), respectively.
Lemma~\ref{lem:lower-stratum-defect} therefore bounds every entry of
\([U,U_\perp]^\top E[V,V_\perp]\) by \(\eps_r\).  Hence
\begin{equation}\label{eq:generator-residual-bound}
 \|E\|\le\sqrt{mn}\,\eps_r=E_G,
\end{equation}
which completes the proof.
\end{proof}

\subsection{Construction of the comparison curve}
\label{app:generator-flow}

For \(t\in[0,T]\) and \(X\in\Mle\), define
\begin{equation}\label{eq:generator-multifunction}
 \mathcal G(t,X):=
 \left\{(G_L,G_R):
 \begin{array}{l}
  \|G_L\|_2,\ \|G_R\|_2\le K_G,\\
  \|F(t,X)-G_LX-XG_R\|\le E_G
 \end{array}
 \right\}.
\end{equation}
Lemma~\ref{lem:bounded-generators} shows that these sets are nonempty.
They are compact and convex.  The continuity of \(F\) also implies that \(\mathcal G\) has a
closed graph.
The SVD-based generators in Lemma~\ref{lem:bounded-generators} need not vary
continuously when singular values coalesce.  We therefore work with the
entire compact convex set \(\mathcal G(t,X)\) and use a differential
inclusion.

\begin{lemma}
\label{lem:generator-flow}
Let \(Y\in\Mle\), \(0\le t_0\le T-h\), and \(h>0\), and let \(A(s)\)
solve \(\dot A(s)=F(t_0+s,A(s))\), \(A(0)=Y\).  There are
absolutely continuous and invertible matrices
\(\mathcal L:[0,h]\to\R^{m\times m}\) and
\(\mathcal R:[0,h]\to\R^{n\times n}\), with
\(\mathcal L(0)=I_m\) and \(\mathcal R(0)=I_n\), such that
\begin{equation}\label{eq:comparison-curve-defined}
 \widetilde Y(s):=\mathcal L(s)Y\mathcal R(s)
\end{equation}
satisfies, for almost every \(s\in[0,h]\),
\begin{equation}\label{eq:generator-flow-residual}
 \|\dot{\widetilde Y}(s)
       -F(t_0+s,\widetilde Y(s))\|\le E_G.
\end{equation}
Moreover,
\begin{equation}\label{eq:generator-flow-distance}
 \|\widetilde Y(s)-A(s)\|
 \le E_Gs e^{L_Fs}.
\end{equation}
The curve and the right factor satisfy, in addition,
\begin{equation}\label{eq:generator-flow-right-factor}
 \rank(\widetilde Y(s))=\rank(Y),
 \qquad
 \|\mathcal R(s)-I_n\|_2\le e^{K_Gs}-1.
\end{equation}
\end{lemma}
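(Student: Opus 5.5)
We construct \(\mathcal L,\mathcal R\) as solutions of a differential inclusion driven by the generator sets \(\mathcal G\). Consider the coupled system
\[
 \dot{\mathcal L}(s)=G_L(s)\mathcal L(s),\qquad
 \dot{\mathcal R}(s)=\mathcal R(s)G_R(s),\qquad
 (G_L(s),G_R(s))\in\mathcal G\bigl(t_0+s,\mathcal L(s)Y\mathcal R(s)\bigr),
\]
with \(\mathcal L(0)=I_m\), \(\mathcal R(0)=I_n\). Then \(\widetilde Y=\mathcal LY\mathcal R\) satisfies \(\dot{\widetilde Y}=G_L\widetilde Y+\widetilde YG_R\), which by definition of \(\mathcal G\) gives \eqref{eq:generator-flow-residual} with bound \(E_G\).

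For existence, the set-valued right-hand side \((\mathcal L,\mathcal R)\mapsto\{(G_L\mathcal L,\mathcal RG_R)\}\) has nonempty compact convex values. It is bounded on bounded sets, and it has closed graph, so it is upper semicontinuous. Nonemptiness holds whenever \(\mathcal LY\mathcal R\in\Mle\) (Lemma~\ref{lem:bounded-generators}), which is the case while \(\mathcal L,\mathcal R\) stay invertible, and in fact for any \(\mathcal L,\mathcal R\) since the rank never exceeds \(\rank Y\le r\). The standard existence theorem for differential inclusions (Filippov/Aubin--Cellina type, for upper semicontinuous maps with compact convex values and linear growth) then yields an absolutely continuous solution on all of \([0,h]\); the linear growth \(\|G_L\mathcal L\|\le K_G\|\mathcal L\|\) rules out blow-up. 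The closed-graph property is asserted just before the lemma; I would verify it directly from the continuity of \(F\) and of \(X\mapsto G_LX+XG_R\). I expect this existence step to be the main obstacle, because \(\mathcal G\) is only defined on \(\Mle\). To handle it I would either extend \(\mathcal G\) to all of \(\R^{m\times n}\) (e.g.\ allow all pairs, or compose with a projection) or simply note that the domain is automatically respected.

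Invertibility comes from Gronwall/Liouville estimates: \(\det\mathcal L(s)=\exp\int_0^s\operatorname{tr}G_L\neq0\), so \(\mathcal L(s)\) and likewise \(\mathcal R(s)\) are invertible. Hence \(\rank\widetilde Y(s)=\rank Y\). For the right factor, write \(\mathcal R(s)-I=\int_0^s\mathcal R G_R\). This gives \(\|\mathcal R(s)-I\|_2\le\int_0^sK_G(1+\|\mathcal R-I\|_2)\), and Gronwall yields \(\|\mathcal R(s)-I\|_2\le e^{K_Gs}-1\).

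Finally, set \(e(s)=\|\widetilde Y(s)-A(s)\|\). Then
\[
 \|\dot{\widetilde Y}-\dot A\|\le E_G+\|F(t_0+s,\widetilde Y)-F(t_0+s,A)\|\le E_G+L_Fe(s)
\]
almost everywhere, with \(e(0)=0\). Gronwall gives \(e(s)\le E_G(e^{L_Fs}-1)/L_F\le E_Gse^{L_Fs}\), which is \eqref{eq:generator-flow-distance}.
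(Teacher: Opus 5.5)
Your proposal is correct and follows essentially the paper's route: the same differential inclusion for \((\mathcal L,\mathcal R)\) driven by \(\mathcal G\), existence from upper semicontinuity with compact convex values and linear growth (the paper makes this explicit by truncating with a radial projection onto a ball and then showing the solution never leaves it), and Gronwall arguments for the residual, for \(\mathcal R-I_n\), and for the distance to \(A\). The differences are minor. You get invertibility from Liouville's formula instead of Gronwall bounds on \(\mathcal L^{-1},\mathcal R^{-1}\); both need the same first-singular-time argument, with \(G_L=\dot{\mathcal L}\mathcal L^{-1}\) measurable on the invertibility interval. Your remark that \(\rank(\mathcal LY\mathcal R)\le\rank Y\le r\) correctly settles the domain issue you were worried about.
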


\begin{proof}
On the Euclidean product space
\begin{equation}\label{eq:generator-product-space}
 \mathcal E:=\R^{m\times m}\times\R^{n\times n},
 \qquad
 \|(\mathcal L,\mathcal R)\|_{\mathcal E}^2
 :=\|\mathcal L\|^2+\|\mathcal R\|^2,
\end{equation}
consider the differential inclusion
\begin{equation}\label{eq:generator-inclusion}
 (\dot{\mathcal L},\dot{\mathcal R})
 \in\mathcal H(s,\mathcal L,\mathcal R),
 \qquad
 (\mathcal L(0),\mathcal R(0))=(I_m,I_n),
\end{equation}
where
\begin{equation}\label{eq:generator-inclusion-map}
 \mathcal H(s,\mathcal L,\mathcal R)
 :=\left\{(G_L\mathcal L,\mathcal R G_R):
 (G_L,G_R)\in
 \mathcal G(t_0+s,\mathcal L Y\mathcal R)\right\}.
\end{equation}
The values of \(\mathcal H\) are nonempty, compact, and convex.  Its graph
is closed: for a convergent sequence of states and velocities, the
corresponding generators lie in a fixed compact ball; a convergent
subsequence and the closed graph of \(\mathcal G\) give the limiting
relation in \eqref{eq:generator-inclusion-map}.  The images of bounded sets
are bounded by \eqref{eq:generator-inclusion-growth} below, so the closed
graph implies joint upper semicontinuity in
\((s,\mathcal L,\mathcal R)\).  In particular,
\(s\mapsto\mathcal H(s,\mathcal L,\mathcal R)\) is Borel measurable for
every fixed \((\mathcal L,\mathcal R)\).  Moreover,
\begin{equation}\label{eq:generator-inclusion-growth}
 \sup_{Z\in\mathcal H(s,\mathcal L,\mathcal R)}
 \|Z\|_{\mathcal E}
 \le K_G\|(\mathcal L,\mathcal R)\|_{\mathcal E}.
\end{equation}

Choose
\begin{equation}\label{eq:truncation-radius}
 M_*>e^{K_Gh}\|(I_m,I_n)\|_{\mathcal E}
\end{equation}
and let \(\pi_{M_*}\) be the radial projection of \(\mathcal E\) onto the
closed ball of radius \(M_*\).  Define
\begin{equation}\label{eq:truncated-inclusion-map}
 \widetilde{\mathcal H}(s,\mathcal L,\mathcal R)
 :=\mathcal H\bigl(s,\pi_{M_*}(\mathcal L,\mathcal R)\bigr).
\end{equation}
This map is upper semicontinuous with nonempty compact convex values,
bounded by \(K_GM_*\).  The finite-dimensional existence theorem
\cite[Theorem~5.1]{Deimling1992}, applied on the full space \(\mathcal E\),
gives an absolutely continuous solution of the truncated inclusion on
\([0,h]\).  Before
its first exit from the ball, \eqref{eq:generator-inclusion-growth} and
Gronwall's inequality give
\begin{equation}\label{eq:generator-state-bound}
 \|(\mathcal L(s),\mathcal R(s))\|_{\mathcal E}
 \le e^{K_Gs}\|(I_m,I_n)\|_{\mathcal E}<M_*.
\end{equation}
Thus no first exit occurs, and the solution satisfies
\eqref{eq:generator-inclusion} on \([0,h]\).

The matrices \(\mathcal L\) and \(\mathcal R\) are invertible near
\(s=0\).  On every interval on which they are invertible,
\eqref{eq:generator-inclusion} gives, almost everywhere,
\begin{align}
 \dot{\mathcal L}&=G_L\mathcal L,
 &\frac{d}{ds}\mathcal L^{-1}&=-\mathcal L^{-1}G_L,
 &\|G_L\|_2&\le K_G,                                \label{eq:left-factor-inverse-flow}\\
 \dot{\mathcal R}&=\mathcal R G_R,
 &\frac{d}{ds}\mathcal R^{-1}&=-G_R\mathcal R^{-1},
 &\|G_R\|_2&\le K_G.                                \label{eq:right-factor-inverse-flow}
\end{align}
Gronwall's inequality therefore gives
\begin{equation}\label{eq:factor-flow-inverse-bounds}
 \|\mathcal L(s)\|_2,\ \|\mathcal L(s)^{-1}\|_2,
 \ \|\mathcal R(s)\|_2,\ \|\mathcal R(s)^{-1}\|_2
 \le e^{K_Gs}.
\end{equation}
Consequently neither factor can become singular at a finite first time,
and \eqref{eq:factor-flow-inverse-bounds} holds on \([0,h]\).

By \eqref{eq:generator-inclusion-map}, for almost every \(s\) there is a
pair \((G_L,G_R)\in\mathcal G(t_0+s,\widetilde Y(s))\) such that
\begin{equation}\label{eq:comparison-curve-derivative}
 \dot{\widetilde Y}(s)
 =G_L\widetilde Y(s)+\widetilde Y(s)G_R.
\end{equation}
Equations \eqref{eq:generator-multifunction} and
\eqref{eq:comparison-curve-derivative} prove
\eqref{eq:generator-flow-residual}.  Invertibility of the two factors in
\eqref{eq:comparison-curve-defined} gives the rank identity in
\eqref{eq:generator-flow-right-factor}.  Integrating
\(\dot{\mathcal R}=\mathcal R G_R\) from \(\mathcal R(0)=I_n\), and using
\(\|G_R\|_2\le K_G\) together with
\eqref{eq:factor-flow-inverse-bounds}, gives
\[
 \|\mathcal R(s)-I_n\|_2
 \le\int_0^sK_Ge^{K_G\tau}\,d\tau
 =e^{K_Gs}-1,
\]
which is the second bound in \eqref{eq:generator-flow-right-factor}.

Since \(\widetilde Y(0)=A(0)=Y\), equations
\eqref{eq:assumption-BL} and \eqref{eq:generator-flow-residual} imply
\begin{equation}\label{eq:comparison-distance-integral}
 \|\widetilde Y(s)-A(s)\|
 \le L_F\int_0^s\|\widetilde Y(\tau)-A(\tau)\|\,d\tau+E_Gs.
\end{equation}
Gronwall's inequality proves \eqref{eq:generator-flow-distance}.
\end{proof}

\subsection{Overlap with the old row space}
\label{app:old-row-overlap}

\begin{proof}[Completion of the proof of Lemma~\ref{lem:comparison-curve}]
Apply Lemma~\ref{lem:generator-flow} with \(Y=Y_0\).  Equations
\eqref{eq:generator-flow-residual},
\eqref{eq:generator-flow-distance}, and
\eqref{eq:generator-flow-right-factor} prove
\eqref{eq:comparison-rank}--\eqref{eq:comparison-flow}.

It remains to prove \eqref{eq:comparison-overlap}.  Choose \(h_*>0\) such that
\begin{equation}\label{eq:comparison-hstar}
 e^{K_Gh_*}-1\le\frac12;
\end{equation}
when \(K_G=0\), take \(h_*=1\).  For \(0\le s\le h\le h_*\), set
\begin{equation}\label{eq:old-row-overlap-matrices}
 D_s:=V_0^\top\mathcal R(s)V_0,
 \qquad
 Q_s:=D_s^{-1}V_0^\top\mathcal R(s).
\end{equation}
The right-factor bound in \eqref{eq:generator-flow-right-factor} gives
\begin{equation}\label{eq:old-row-overlap-bounds}
 \|D_s-I_r\|_2\le\frac12,
 \qquad
 \|D_s^{-1}\|_2\le2,
 \qquad
 \|Q_s\|_2\le2e^{K_Gs}\le3.
\end{equation}
Using \(Y_0=Y_0V_0V_0^\top\), we obtain
\begin{align}
 \widetilde Y(s)V_0
 &=\mathcal L(s)Y_0V_0 D_s,                              \label{eq:old-row-product}\\
 (\widetilde Y(s)V_0)Q_s
 &=\mathcal L(s)Y_0V_0V_0^\top\mathcal R(s)
  =\widetilde Y(s).                                  \label{eq:old-row-reconstruction}
\end{align}
Equations \eqref{eq:old-row-overlap-bounds} and
\eqref{eq:old-row-reconstruction} prove
\eqref{eq:comparison-overlap} and complete the proof.
\end{proof}

\section*{Acknowledgement}

This work was partially supported by DOE grant DE-SC0023164, NSF grants DMS-2409858 and IIS-2433957, and DoD MURI grant FA9550-24-1-0254.

\section*{Declaration of AI assistance}
The authors used ChatGPT 5.6 Sol to assist in developing the construction
in Appendix~\ref{app:comparison-curve}, polishing the manuscript, and
performing adversarial checks of the mathematical arguments.  The authors
reviewed and checked all AI-assisted material and take full responsibility
for the correctness and integrity of the final manuscript.

\begingroup
\bibliographystyle{siamplain}
\bibliography{references}
\endgroup

\end{document}